\newcommand{\Diff}{\mathcal{D}}
\newcommand{\Lie}{\mathcal{L}}
\newcommand{\Null}{\mathcal{N}}
\newcommand{\Ann}{\mathcal{A}}
\newcommand{\id}{\text{id}}
\newcommand{\dist}{\delta}
\newcommand{\smally}{\epsilon}
\newcommand{\grad}{\nabla}
\newcommand{\Laplacian}{\Delta}
\DeclareMathOperator{\ad}{ad} \DeclareMathOperator{\Ad}{Ad}
\DeclareMathOperator{\diver}{div} 
\newcommand{\contact}{\theta}
\newcommand{\Reeb}{E}
\newcommand{\onedparam}{\alpha}
\newcommand{\Diffs}{\Diff^s}
\newcommand{\Diffsemi}{\widetilde{\Diff}}
\newcommand{\Diffssemi}{\widetilde{\Diffs}}
\newcommand{\contactlap}{\widetilde{\Laplacian_{\contact}}}
\newcommand{\Diffcontact}{\Diff_{\contact}}
\newcommand{\Diffscontact}{\Diff^s_{\contact}}
\newcommand{\Diffexcontact}{\Diff_{q}}
\newcommand{\Diffsexcontact}{\Diff^s_q}
\newcommand{\Diffham}{\Diff_{\text{Ham}}}
\newcommand{\Difftilde}{\widetilde{\Diff}}
\newcommand{\Scontact}{S_{\contact}}
\newcommand{\Sop}{\Scontact}
\newcommand{\asty}{\star}
\newcommand{\Ssemiop}{\widetilde{\Scontact}}
\newcommand{\Ssemiopstar}{\widetilde{\Scontact^{\asty}}}
\newcommand{\contactlapinv}{{(\widetilde{\Laplaciancontact})}^{_{-1}}}
\newcommand{\llangle}{\langle\!\langle}
\newcommand{\rrangle}{\rangle\!\rangle}
\newcommand{\Laplaciancontact}{\Laplacian_{\contact}}
\newcommand{\Diffsemicontact}{\widetilde{\Diff_{\contact}}}
\newcommand{\Diffssemicontact}{\widetilde{\Diff^s_{\contact}}}
\DeclareMathOperator{\Jac}{Jac}
\newcommand{\mingeodesic}{\chi}
\newcommand{\vorticity}{\omega}
\newtheorem{theorem}{Theorem}[section]
\newtheorem{proposition}[theorem]{Proposition}
\newtheorem{lemma}[theorem]{Lemma}
\newtheorem{corollary}[theorem]{Corollary}
\theoremstyle{definition}
\newtheorem{defn}[theorem]{Definition}
\newtheorem{exmp}[theorem]{Example}
\theoremstyle{remark}
\begin{document}

\selectlanguage{USenglish}

\title[]{Riemannian geometry of the contactomorphism group}
%\author{David G. Ebin and Stephen C. Preston}

\author{David G. Ebin}
\address{Department of Mathematics, Stony Brook University, Stony Brook, NY 11794-3651}
\email{ebin@math.sunysb.edu}
\author{Stephen C. Preston}
\address{Department of Mathematics, University of Colorado,
Boulder, CO 80309-0395} \email{stephen.preston@colorado.edu}
\date{\formatdate{31}{8}{2014}}

\maketitle

\tableofcontents

\section{Introduction}

The ``classical'' diffeomorphism groups of a manifold~\cite{banyaga} are those groups that preserve a volume form, a symplectic form, a contact form, or a contact structure. A Riemannian metric on the manifold generates a right-invariant Riemannian metric on the diffeomorphism group, and the geodesic equation of this metric can be written in terms of what is known as the Euler-Arnold equation~\cite{arnoldkhesin} on its Lie algebra.  This equation can be expressed as a partial differential equation on the manifold; the best-known and most important example is the Euler equation of ideal incompressible fluid mechanics, which is the Euler-Arnold equation on the group of volume-preserving diffeomorphisms. The corresponding equation on the group of symplectomorphisms has been studied in \cite{ebinsymplectic} and \cite{khesinsymplectic}; the Euler-Arnold equation in that case coincides %has many similarities
 with two-dimensional hydrodynamics. In this paper we extend these ideas to study the Euler-Arnold equation on the group of contactomorphisms.

Recall that a contact structure on an orientable manifold $M$ of odd dimension $2n+1$ is the nullspace $\Null(\theta)$ of some %contact
$1$-form $\contact$ which satisfies the nondegeneracy condition that $\contact \wedge d\contact^n$ is nowhere zero. Such a $1$-form is called a contact form.  We will assume that $M$ is equipped with a Riemannian metric which is \emph{associated} to the contact form~\cite{blair} (see Definition \ref{associateddef} below), which simplifies our computations, but all of the results are valid regardless of the metric. Let $\Diff(M)$ be the diffeomorphism group of $M$.
 Then $\eta \in \Diff(M)$ is called a \emph{contactomorphism} if $\eta^*\contact$ is a positive multiple of $\contact$, and we denote the group of such contactomorphisms by $\Diffcontact(M)$. Keeping track of this multiple, we get the group of ``padded contactomorphisms'' $\Diffsemicontact(M) = \{ (\eta, \Lambda) \, \vert \, \eta^*\contact = e^{\Lambda}\contact\}$, a subgroup of the semidirect product $\Diff(M)\ltimes C^{\infty}(M)$ whose group law is given by
\begin{equation}\label{grouplaw}
(\eta, \Lambda)\star (\xi, \Phi) = (\eta\circ\xi, \Lambda\circ\xi + \Phi).
\end{equation}
This subgroup will be our main object of interest.

Its Lie algebra may be identified with the space of smooth functions on $M$ (see Proposition \ref{Scontactdef}), and the Euler-Arnold equation takes the form
\begin{equation}\label{main}
m_t + u(m) + (n+2) m E(f) = 0,
\end{equation}
where $E$ is the Reeb field (defined below), $u=\Scontact f$ is the contact vector field generated by $f$, and $m$ is the momentum given by $m=f-\Laplacian f$ if the Riemannian metric is associated. Equation \eqref{main} reduces to the Camassa-Holm equation~\cite{camassaholm}
\begin{equation}\label{camassaholm}
m_t + fm_{\onedparam} + 2 m f_{\onedparam} = 0, \qquad m = f-f_{\onedparam\onedparam},
\end{equation}
if $M$ is one-dimensional; it is well-known~\cite{kouranbaeva, misiolekCH} that equation \eqref{camassaholm} is the Euler-Arnold equation on $\Diff(S^1)$ with the right-invariant $H^1$ metric. We will see that equation \eqref{main} has many properties in common with both the Camassa-Holm equation and with two-dimensional hydrodynamics.

Our main results are as follows. First we show that \eqref{main} can be expressed as a smooth ordinary differential equation on the Hilbert manifold $\Diffssemicontact(M)$ of contactomorphisms of Sobolev class $H^s$, when $s>n+3/2$. (Recall that $M$ has dimension $2n+1$.) Therefore we have a smooth Riemannian exponential map which is defined in some neighborhood of zero in $T_{\id}\Diffsemicontact(M).$ %in for possibly short time.
As a consequence we have local well-posedness for \eqref{main}: for any $f_0\in H^{s+1}(M)$ there is a unique solution $f(t)\in H^{s+1}(M)$ defined for $t\in (-\varepsilon, \varepsilon)$ of \eqref{main} with $f(0)=f_0$.

We derive a Beale-Kato-Majda type of global existence criterion for solutions of \eqref{main} which says that a solution exists up to time $T$ if and only if the integral $$\int_0^T \lVert \Reeb(f)(t)\rVert_{L^{\infty}} \, dt$$
is finite. One special case occurs if the metric is associated and the Reeb field $\Reeb$ is a Killing field with all orbits closed and of the same length---then we call the metric and contact structure $K$-contact and the contact form %is called
regular. Under these circumstances solutions of \eqref{main} preserve the property that $\Reeb(f)=0$ if it is satisfied initially. Such solutions represent geodesics on the group $\Diffexcontact(M)$ of quantomorphisms, consisting of those diffeomorphisms which preserve the contact form exactly (i.e., $\eta^*\contact = \contact$). We show that this is a totally geodesic subgroup for which all geodesics exist globally in time. An alternative view of the quantomorphism group is as a central extension of the group of Hamiltonian diffeomorphisms of the symplectic manifold $N$ which is obtained as a Boothby-Wang quotient of $M$. For this situation the Euler-Arnold equation takes the form $m_t + \{f,m\} = 0$ with $m=f-\Laplacian f$ on the quotient $N$. This equation is related to the beta-plane approximation for the quasigeostrophic equation in geophysical fluid dynamics, as we shall explain.

Finally we discuss two aspects of equation \eqref{main} which are related to the Camassa-Holm equation. The first is ``peakons,'' singular solutions of \eqref{main} for which the momentum $m$ is initially supported on a set of codimension at least one. The most interesting situation in contact geometry is the case $n=1$ (where $M$ has dimension three) and we consider $m_0$ supported on some surface. Then $m(t)$ is supported on a surface $\Gamma(t)$, and we can write an evolution equation for $\Gamma(t)$. This notion has potential application for the study of overtwisted contact structures; see for example \cite{EKM}.

We conclude by proving some conservation laws. It is well-known that the Camassa-Holm equation is a bihamiltonian equation which is thus completely integrable and has infinitely many conservation laws. The three simplest are $C_{-1} = \int_{S^1} \sqrt{m_+} \,dx$, $C_0 = \int_{S^1} m \, dx$, and $C_1 = \int_{S^1} fm\,d\onedparam$, where $m_+$ is the positive part of the momentum $m=f-f_{\onedparam\onedparam}$.
We show that equation \eqref{main} has the same three conservation laws, although we do not know if any of the other laws generalize or if there is a bihamiltonian structure.

We thank Roberto Camassa, Daniel Fusca, Fran\c cois Gay-Balmaz, Helmut Hofer, Darryl Holm, Boris Khesin, Gerard Misio{\l}ek, Alejandro Sarria, and Cornelia Vizman for many useful discussions during the preparation of this manuscript. The second author was supported by NSF grant DMS-1105660.

\section{Basic Constructs}

\subsection{Contact structures and contact forms}

A contact manifold $(M,\contact)$ is an orientable manifold $M$ of odd dimension $2n+1$ together with a $1$-form $\contact$ such that $\contact \wedge d\contact^n$ is nowhere zero. The contact structure is a distribution in $TM$ defined at each point $p$ as $\Null(\contact)$, the nullspace of $\contact:T_pM \rightarrow {\bf R}$. (In the nonorientable case there are contact structures not determined by contact forms, but for simplicity we do not consider these.) In contact geometry one is primarily concerned with the contact structure~\cite{geiges, EKM}, and thus a contact form $\contact$ is equivalent to $F\contact$ whenever $F$ is a positive function on $M$. A diffeomorphism $\eta\in \Diff(M)$ is called a \emph{contactomorphism} if $\eta^*\contact = e^{\Lambda} \contact$ for some function $\Lambda\colon M\to \mathbb{R}$. In some cases one is concerned with the contact form itself, and we say that a \emph{quantomorphism} is a diffeomorphism $\eta$ such that $\eta^*\contact=\contact$; see \cite{ratiuschmid} and Section \ref{quantosection} below for details. The Riemannian geometry of the group of quantomorphisms was studied by Smolentsev~\cite{smolentsevquanto}, but to our knowledge the Riemannian geometry of the group of contactomorphisms has never been studied in depth.

Our primary concern is with the Lie algebra of contact vector fields, those for which the local flow preserves the contact structure. We review some of the basic concepts; see Geiges~\cite{geiges} for more details. Given a contact form $\contact$, there is a unique vector field $\Reeb$, called the \emph{Reeb field}, defined by the conditions
$$\iota_{\Reeb}d\contact = 0 \quad\text{and}\quad \contact(\Reeb)\equiv 1.$$  The uniqueness of $\Reeb$ is a direct consequence of the fact that $\contact \wedge d\contact^n$ is never zero. The following characterization of contact vector fields is well-known.

\begin{proposition}\label{Scontactdef}
The Lie algebra $T_{\id}\Diffcontact(M)$ consists of vector fields $u$ such that
 $\Lie_u\contact = \lambda \contact$ for some function $\lambda\colon M\to\mathbb{R}$. Any such field $u$ is uniquely determined by the function $f = \contact(u)$,
so we write  $u=\Scontact f$. In this case the multiplier $\lambda$ is given by $\Reeb(f)$.
The padded contactomorphism group $\Diffsemicontact(M)$ has Lie algebra of the form $$T_{\id}\Diffsemicontact(M) = \big\{ \Ssemiop f = (\Scontact f, \Reeb f) \, \vert \, f\in C^{\infty}(M)\big\}.$$
\end{proposition}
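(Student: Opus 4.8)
The plan is to work through the three assertions in sequence, since each builds on the previous one.

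First I would characterize $T_{\id}\Diffcontact(M)$. Given a one-parameter family $\eta_t$ of contactomorphisms with $\eta_0 = \id$, we have $\eta_t^*\contact = e^{\Lambda_t}\contact$ for functions $\Lambda_t$ with $\Lambda_0 = 0$. Differentiating at $t=0$ and setting $u = \frac{d}{dt}\big|_{t=0}\eta_t$, $\lambda = \frac{d}{dt}\big|_{t=0}\Lambda_t$, the left side becomes $\Lie_u\contact$ by the standard formula for the derivative of a pullback, and the right side becomes $\lambda\contact$. Conversely, if $\Lie_u\contact = \lambda\contact$, then the flow $\eta_t$ of $u$ satisfies $\frac{d}{dt}\eta_t^*\contact = \eta_t^*\Lie_u\contact = \eta_t^*(\lambda\contact) = (\lambda\circ\eta_t)\,\eta_t^*\contact$, which is a linear ODE in $\eta_t^*\contact$ showing $\eta_t^*\contact = e^{\Lambda_t}\contact$ with $\Lambda_t = \int_0^t \lambda\circ\eta_s\,ds$; hence $u$ is tangent to $\Diffcontact(M)$.

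Next I would show the map $u \mapsto f = \contact(u)$ is a bijection from contact vector fields onto $C^{\infty}(M)$, and compute $\lambda$. Using Cartan's formula $\Lie_u\contact = \iota_u d\contact + d\iota_u\contact = \iota_u d\contact + df$, the condition $\Lie_u\contact = \lambda\contact$ reads $\iota_u d\contact = \lambda\contact - df$. Contracting with the Reeb field $\Reeb$ and using $\iota_\Reeb d\contact = 0$ together with $\iota_\Reeb d\contact(u) = -d\contact(u,\Reeb) = -\iota_u d\contact(\Reeb)$, I get $0 = \lambda\,\contact(\Reeb) - df(\Reeb) = \lambda - \Reeb(f)$, so $\lambda = \Reeb(f)$ as claimed. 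For the bijection: injectivity follows because $\contact(u) = 0$ forces $\iota_u d\contact = \lambda\contact$ with $\lambda = \Reeb(0) = 0$, so $u$ lies in the kernel of both $\contact$ and $d\contact$, which is trivial by nondegeneracy of $\contact\wedge d\contact^n$. For surjectivity, given $f$, one solves the pointwise-linear system $\contact(u) = f$, $\iota_u d\contact = \Reeb(f)\,\contact - df$ for $u$; this is solvable and uniquely so because the bundle map $v \mapsto (\contact(v), \iota_v d\contact)$ is an isomorphism from $TM$ onto $\{(c,\beta) : \beta(\Reeb) = 0\}\oplus\mathbb{R}$ (again by nondegeneracy), and one checks the right-hand side satisfies the compatibility condition $(\Reeb(f)\contact - df)(\Reeb) = 0$. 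One then verifies the resulting $u$ does satisfy $\Lie_u\contact = \Reeb(f)\contact$, closing the loop. I would write $u = \Scontact f$.

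Finally, for the padded group, I would differentiate the defining relation of $\Diffsemicontact(M)$: a curve $(\eta_t,\Lambda_t)$ through $(\id,0)$ with $\eta_t^*\contact = e^{\Lambda_t}\contact$ has tangent $(u,\lambda)$ with $u$ a contact field and, by the first step, $\lambda = \Reeb(\contact(u)) = \Reeb(f)$; so the tangent space is exactly $\{(\Scontact f, \Reeb f) : f\in C^{\infty}(M)\}$, which we name $\Ssemiop f$. The main obstacle is the surjectivity/reconstruction step — verifying that the algebraically-defined $u$ genuinely generates a contactomorphism flow and that the linear system is everywhere solvable — but this is entirely standard contact geometry and amounts to unwinding the nondegeneracy of $\contact\wedge d\contact^n$; I would cite Geiges~\cite{geiges} rather than belabor it.
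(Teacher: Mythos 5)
Your proposal is correct and follows essentially the same route as the paper: differentiate the pullback relation to characterize the tangent space, apply Cartan's formula and contract with $\Reeb$ to get $\lambda = \Reeb(f)$, and invert the nondegenerate bundle map $v \mapsto \iota_v d\contact$ on $\Null(\contact)$ to reconstruct $u$ from $f$. Your converse step is in fact slightly more careful than the paper's, which asserts $\eta(t)^*\contact = e^{t\lambda}\contact$ for the flow of $u$, whereas your formula $\Lambda_t = \int_0^t \lambda\circ\eta_s\,ds$ is the correct general statement.
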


\begin{proof}
Given a family of contactomorphisms $\eta(t)$ satisfying $\eta(t)^*\contact = e^{\Lambda(t)} \contact$ with $\eta(0)=\id$ and $\dot{\eta}(0) = u$, differentiating at $t=0$ gives $\Lie_u\contact = \dot{\Lambda}(0) \contact$; conversely
given any vector field $u$ such that $\Lie_u\contact = \lambda \contact,$ then $\eta(t)$, the flow of $u$, satisfies $\eta(t)^*\contact = e^{t\lambda} \contact$.
Hence $u\in T_{\id}\Diffcontact(M)$ iff $\Lie_u\contact = \lambda \contact$ for some function $\lambda$.

Since the Reeb field $\Reeb$ is not in the nullspace $\Null(\contact)$,  any vector field $u$ can be decomposed as $u=v+f\Reeb$ where $v \in \Null(\contact).$
If $u\in T_{\id}\Diffcontact(M)$ is decomposed as above, then $f=\contact(u)$, and by the Cartan formula we have $\iota_ud\contact + df = \lambda \contact$.  Applying both sides to the Reeb field $\Reeb$ we obtain $\lambda = \Reeb(f)$. The fact that  $\contact \wedge d\contact^n$ is never zero implies that $d\contact$ must have rank $2n$ at each point. Hence the map $u\mapsto \iota_ud\contact$ is an isomorphism in each tangent space $T_pM$ from $\Null( \contact) \subset T_pM$ to $\Ann(\Reeb)\subset T_p^*M$ (the annihilator of $\Reeb$). We denote this map by $\gamma$ and its inverse by $\Gamma$; then $\Lie_u\contact = \lambda \contact$ if and only if $\lambda = \Reeb(f)$ and $u = \Gamma\big( \Reeb(f)\contact - df\big) + f \Reeb$, where $f=\contact(u)$.
Define $\Scontact(f)$ to be $u$.
\end{proof}

By the Darboux theorem~\cite{geiges}, every contact form can be expressed in some local coordinates $(x^1,\cdots, x^n, y^1,\cdots, y^n, z)$ as $\theta = dz - \sum_{k=1}^n y^k \,dx^k$. In such coordinates the Reeb field is given by $\Reeb = \frac{\partial}{\partial z}$, and the operator $S_{\contact}f$ is given in terms of the frame $P_k = \frac{\partial}{\partial x^k} + y^k \, \frac{\partial}{\partial z},$ $Q_k = \frac{\partial}{\partial y^k}$ and $E$ as
\begin{equation}\label{Scontactdarboux}
S_{\contact}f = -\sum_k Q_k(f) P_k + \sum_k P_k(f) Q_k + f \Reeb.
\end{equation}
We note that $\Sop f$ differentiates $f$ in only $2n$ directions; the omitted direction is $\Reeb$. This will be important later when we discuss smoothness in the Sobolev context.

The padded contactomorphism group $\Diffsemicontact(M) = \{(\eta, \Lambda) \, \vert \, \eta^*\contact = e^{\Lambda} \contact\}$ has the structure of a semidirect product, since if $\eta^*\contact = e^{\Lambda} \contact$ and $\xi^*\contact = e^{\Phi}\contact$, then $(\eta\circ\xi)^*\contact = \xi^*(e^{\Lambda}\contact) = e^{\Lambda\circ\xi+\Phi} \contact$. Hence the group law is as given by \eqref{grouplaw}.
%\begin{equation}\label{grouplaw}
%(\eta, \Lambda)\star (\xi, \Phi) = (\eta\circ\xi, \Lambda\circ\xi + \Phi).
%\end{equation}
%The Lie algebra is
%$$ T_{\id}\Diffsemicontact(M) = \{ (u, \lambda)\in T_{\id}\Diff(M)\ltimes C^{\infty}(M) \, \vert \, \Lie_u\contact = \lambda \contact\};$$
%we write $(u,\lambda) = \widetilde{S_{\contact}}f = (S_{\contact}f, E(f))$.
The Lie bracket is given by
\begin{equation}\label{contactpoisson}
[\Ssemiop f, \Ssemiop g] = \Ssemiop \{f,g\}, \qquad \text{where } \{f,g\} = \Sop f(g) - g \Reeb(f);
\end{equation}
we will refer to the bracket on functions as a ``contact Poisson bracket.''
Note that unlike a symplectic bracket it does not satisfy the Leibniz rule.

\subsection{Associated Riemannian metrics}

We now want to consider a Riemannian structure on $M$. Although in principle the analysis is very similar whether or not the Riemannian metric is related to the contact form in any way, it is convenient to require some stronger compatibility. A reasonable minimum condition is that the volume form generated by the Riemannian metric be a constant multiple of $\contact \wedge (d\contact)^n$, which ensures that the Reeb field $\Reeb$ is divergence-free. A stronger condition is that the Riemannian metric be \emph{associated} to the contact form.

\begin{defn}\label{associateddef}
If $M$ is a contact manifold with contact form $\contact$ and Reeb field $\Reeb$, a Riemannian metric $\langle \cdot, \cdot\rangle$ is called \emph{associated} if it satisfies the following conditions:
\begin{itemize}
\item $\contact(u) = \langle u, \Reeb\rangle$ for all $u\in TM$, and
\item there exists a $(1,1)$-tensor field $\phi$ such that $\phi^2(u) = -u + \contact(u) \Reeb$ and $d\contact(u,v) = \langle u, \phi v\rangle$ for all $u$ and $v$.
\end{itemize}
\end{defn}

It is known that every contact manifold has an infinite-dimensional family of associated Riemannian metrics~\cite{blair}.

Having an associated metric allows us to simplify some formulas, as follows.

\begin{proposition}\label{associatedprop}
Suppose $M$ has a contact form $\contact$ and an associated Riemannian metric as in Definition \ref{associateddef}. Then we have the following:
\begin{itemize}
\item $\Reeb$ is a unit vector field.
\item Contact vector fields are given by
\begin{equation}\label{associatedScontact}
\Sop f = f \Reeb - \phi \grad f
\end{equation}
\item The momentum $m = \Ssemiopstar \Ssemiop f$, where $\Ssemiopstar$ is the formal adjoint of $\Ssemiop$, is given by
\begin{equation}\label{associatedmomentum}
m = f - \Laplacian f.
\end{equation}
\item There is an orthonormal frame $\{\Reeb, P_1,\ldots, P_n, Q_1,\ldots, Q_n\}$ such that $\phi(P_k) = -Q_k$, $\phi(Q_k) = P_k$, and $\phi(\Reeb)=0$. Thus we have
    $\Sop f = f \Reeb +\sum_{k=1}^n P_k(f) Q_k - Q_k(f) P_k$,
    as in \eqref{Scontactdarboux}.
\item If $n\ge 1$, the Riemannian volume form $\mu$ is given by $\mu = \frac{1}{n} \contact \wedge (d\contact)^n$.
\item For any function $f$, we have
\begin{equation}\label{divergenceSop}
\diver{(\Sop f)} = (n+1) \Reeb(f).
\end{equation}
In particular since $\Reeb = \Sop(1)$, the Reeb field $\Reeb$ is divergence-free.
\end{itemize}
\end{proposition}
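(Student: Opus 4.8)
The plan is to establish the six assertions in order, each by unwinding Definition~\ref{associateddef} together with Proposition~\ref{Scontactdef}. The first is immediate: setting $u=\Reeb$ in $\contact(u)=\langle u,\Reeb\rangle$ gives $\langle\Reeb,\Reeb\rangle=\contact(\Reeb)=1$. The one structural fact I will use throughout is that $\phi$ is skew-adjoint for $\langle\cdot,\cdot\rangle$: since $d\contact$ is antisymmetric, $\langle u,\phi v\rangle=d\contact(u,v)=-d\contact(v,u)=-\langle\phi u,v\rangle$. Together with $\phi^2u=-u+\contact(u)\Reeb$ this gives the pointwise identity $\langle\phi u,\phi v\rangle=\langle u,v\rangle-\contact(u)\contact(v)$, and in particular $\lVert\phi\Reeb\rVert^2=1-1=0$, so $\phi\Reeb=0$.

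For \eqref{associatedScontact}, recall from Proposition~\ref{Scontactdef} that $\Sop f$ is the unique vector field of the form $v+f\Reeb$ with $v\in\Null(\contact)$ and $\iota_v d\contact=\Reeb(f)\contact-df$. I would check directly that $v=-\phi\grad f$ works: it lies in $\Null(\contact)$ since $\contact(\phi\grad f)=\langle\phi\grad f,\Reeb\rangle=-\langle\grad f,\phi\Reeb\rangle=0$, and for every $w$,
\begin{equation*}
d\contact(-\phi\grad f,w)=-\langle\phi\grad f,\phi w\rangle=-\langle\grad f,w\rangle+\contact(\grad f)\contact(w)=\Reeb(f)\contact(w)-df(w),
\end{equation*}
using the identity above and $\contact(\grad f)=df(\Reeb)=\Reeb(f)$; uniqueness then gives \eqref{associatedScontact}. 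For the orthonormal frame, the restriction of $\phi$ to each fiber of $\Null(\contact)$ squares to $-\id$ and is metric-compatible, so the standard unitary-frame construction (choose a unit $P_1\perp\Reeb$, set $Q_1=\phi P_1$, and induct on $\{\Reeb,P_1,Q_1\}^{\perp}$, which is $\phi$-invariant) yields local orthonormal fields $P_1,Q_1,\dots,P_n,Q_n$ spanning $\Null(\contact)$ with $\phi P_k=-Q_k$ and $\phi Q_k=P_k$; adjoining $\Reeb$ gives the frame, and expanding $\grad f=\Reeb(f)\Reeb+\sum_k\big(P_k(f)P_k+Q_k(f)Q_k\big)$ and applying $-\phi$ recovers the coordinate form of $\Sop f$.

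For the momentum, the right-invariant metric on $\Diffsemicontact(M)$ is induced at the identity by the $L^2$ pairing $\llangle f,g\rrangle=\int_M\big(\langle\Sop f,\Sop g\rangle+(\Reeb f)(\Reeb g)\big)\,\mu$, and $m=\Ssemiopstar\Ssemiop f$ is by definition the operator with $\langle m,g\rangle_{L^2(M,\mu)}=\llangle f,g\rrangle$. Substituting \eqref{associatedScontact}, the cross terms vanish since $\langle\Reeb,\phi\grad g\rangle=0$, and $\langle\phi\grad f,\phi\grad g\rangle=\langle\grad f,\grad g\rangle-(\Reeb f)(\Reeb g)$ cancels the $\Reeb$-contributions, leaving integrand $fg+\langle\grad f,\grad g\rangle$; an integration by parts ($M$ closed) gives $m=f-\Laplacian f$. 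For the volume form, pass to the coframe $\{\contact,\alpha^1,\beta^1,\dots,\alpha^n,\beta^n\}$ dual to the frame just built; a direct check gives $d\contact=\sum_k\alpha^k\wedge\beta^k$, so $(d\contact)^n$ is a nonzero constant times $\alpha^1\wedge\beta^1\wedge\cdots\wedge\alpha^n\wedge\beta^n$, and comparison with $\mu=\contact\wedge\alpha^1\wedge\beta^1\wedge\cdots\wedge\alpha^n\wedge\beta^n$ yields the asserted relation between $\mu$ and $\contact\wedge(d\contact)^n$.

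Finally, for the divergence I would compute $\Lie_{\Sop f}\mu$ using $\Lie_{\Sop f}\mu=(\diver\Sop f)\,\mu$ and the previous bullet, reducing to $\Lie_{\Sop f}\big(\contact\wedge(d\contact)^n\big)$. Since $\Lie_{\Sop f}\contact=\Reeb(f)\contact$ (Proposition~\ref{Scontactdef}) and $\Lie_{\Sop f}d\contact=d\big(\Reeb(f)\,\contact\big)=d\big(\Reeb(f)\big)\wedge\contact+\Reeb(f)\,d\contact$, every term with a repeated factor of $\contact$ drops out and one gets $\Lie_{\Sop f}\big(\contact\wedge(d\contact)^n\big)=(n+1)\Reeb(f)\,\contact\wedge(d\contact)^n$; dividing by the constant from the volume-form bullet gives $\diver(\Sop f)=(n+1)\Reeb(f)$, and $f\equiv1$ (so $\Sop 1=\Reeb$ by \eqref{associatedScontact}) gives $\diver\Reeb=0$. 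None of these steps is deep --- the content is bookkeeping with the compatibility relations of Definition~\ref{associateddef}. The step to watch is the momentum computation: the precise role of the skew-adjointness of $\phi$ is to make the $\Reeb$-direction cancel out of the $L^2$ pairing, so that what survives is exactly the $H^1$ quadratic form $\int_M\big(fg+\langle\grad f,\grad g\rangle\big)\,\mu$, whose associated operator is $\id-\Laplacian$.
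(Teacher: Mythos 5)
Your proposal follows essentially the same route as the paper's proof: direct verification of \eqref{associatedScontact} from the defining properties of $\phi$, the identity $\langle \phi u,\phi v\rangle=\langle u,v\rangle-\contact(u)\contact(v)$ to reduce the $L^2$ pairing to the $H^1$ form $\int fg+\langle\grad f,\grad g\rangle\,d\mu$, the same frame construction, evaluation of $\contact\wedge(d\contact)^n$ on that frame, and the Lie-derivative/product-rule computation for the divergence. Your up-front derivation of skew-adjointness of $\phi$ (and hence $\phi\Reeb=0$ from $\lvert\phi\Reeb\rvert^2=1-1=0$) is a slightly cleaner packaging than the paper's, and checking $\iota_vd\contact=\Reeb(f)\contact-df$ against the uniqueness statement of Proposition \ref{Scontactdef} is equivalent to the paper's direct verification of $\Lie_u\contact=\Reeb(f)\contact$ via Cartan's formula.

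Two small points to fix. First, your frame construction has a sign slip: setting $Q_1=\phi P_1$ yields $\phi P_1=+Q_1$ and $\phi Q_1=\phi^2P_1=-P_1$, which is the opposite of the stated relations $\phi P_k=-Q_k$, $\phi Q_k=P_k$ and would flip the sign of the $-\phi\grad f$ expansion; take $Q_k=-\phi P_k$ (or, as the paper does, choose $Q_k$ first and set $P_k=\phi Q_k$). Second, in the volume-form bullet you only establish that $\contact\wedge(d\contact)^n$ is \emph{some} nonzero constant times $\mu$, but the specific constant is the entire content of that item: you need to actually expand $\bigl(\sum_k\alpha^k\wedge\beta^k\bigr)^n$ and evaluate on the frame $(\Reeb,P_1,\ldots,P_n,Q_1,\ldots,Q_n)$ to pin it down, as the paper does. (For everything downstream --- Lemma \ref{jacobianlemma} and the divergence formula --- only the fact that the constant is nonzero matters, but the proposition as stated asserts a specific value.) Everything else is correct.
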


\begin{proof}
$\Reeb$ is unit since $\lvert \Reeb\rvert^2 = \contact(\Reeb) = 1$. Since $\contact(\phi(v)) = \langle \Reeb, \phi(v)\rangle = d\contact(\Reeb, v) = 0$ for any vector $v$, we see that $\phi$ maps into the nullspace of $\contact$. Hence if $u=f\Reeb - \phi\grad f$ we will have (for any vector field $v$)
%red
\begin{align*}
\Lie_u\contact(v) &= d\contact(u,v) + v\big( \contact(u)\big) \\
&= d\contact(f\Reeb - \phi\grad f, v) + v(f) \\
&= d\contact(v, \phi \grad f) + v(f) \\
&= \langle v, \phi^2 \grad f\rangle + \langle v, \grad f\rangle \\
&= -\langle v, \grad f\rangle + \langle v, \contact(\grad f) \Reeb\rangle + \langle v,\grad f\rangle \\
&= \Reeb(f) \contact(v).
\end{align*}
Since this is true for any $v$ we must have $\Lie_u\contact = \Reeb(f) \contact$ which means that $u = \Sop f$.

To obtain the formula for the momentum, we need to compute the formal adjoint of
$\Ssemiop.$ Integrating by parts we get:
%red
\begin{align*}
\llangle \Ssemiop f, \Ssemiop g\rrangle &= \int_M \langle \Sop f, \Sop g\rangle \, d\mu + \int_M \Reeb(f)\Reeb(g) \, d\mu \\
%&= \int_M \langle f\Reeb - \phi \grad f, g \Reeb - \phi \grad g\rangle \, d\mu + \int_M \Reeb(f) \Reeb(g) \, d\mu \\
&= \int_M \big( fg + \langle \phi\grad f, \phi \grad g\rangle + \Reeb(f)\Reeb(g)\big) \ d\mu,
\end{align*}
since $\Reeb$ is orthogonal to the image of $\phi$. Now \begin{multline*}
\langle \phi\grad f,\phi \grad g\rangle = d\contact(\phi \grad f, \grad g) = -d\contact(\grad g, \phi \grad f) = -\langle \grad g, \phi^2 \grad f\rangle \\
= \langle \grad g, \grad f\rangle - \langle \grad g, \contact(\grad f) \Reeb\rangle = \langle \grad g, \grad f\rangle - \Reeb(f)\Reeb(g).
\end{multline*}
We conclude that
 \begin{align*}
 \int_M g \Ssemiopstar\Ssemiop f\, d\mu &= \llangle \Ssemiop f, \Ssemiop g\rrangle = \int_M \big( fg + \langle \grad f, \grad g\rangle \big) \, d\mu \\
 &= \int_M g(f - \Laplacian f) \, d\mu
 \end{align*}
for any function $g$, as desired.

The orthonormal basis is constructed as follows: take an arbitrary unit vector field $Q_1$ orthogonal to $\Reeb$, and define $P_1 = \phi(Q_1)$; then $\langle P_1, Q_1\rangle = \langle Q_1, \phi(Q_1)\rangle = d\contact(Q_1,Q_1)=0$, and $P_1$ is orthogonal to $\Reeb$ since it is in the image of $\phi$. Clearly $\phi(P_1) = -Q_1$. Choose $Q_2$ orthogonal to all three, and $P_2 = \phi(Q_2)$; then $\langle P_2, Q_1\rangle = \langle \phi Q_2, Q_1\rangle = -\langle Q_2, P_1\rangle = 0$ and $\langle P_2, P_1\rangle = \langle \phi Q_2, \phi Q_1\rangle = -\langle Q_2, Q_1\rangle = 0$ since $\phi^2$ is the negative identity on the orthogonal complement of $\Reeb$. We continue in this way to obtain the orthonormal frame, then use the fact that $\grad f = \Reeb(f) \Reeb + \sum_k P_k(f) P_k + Q_k(f) Q_k$ to obtain the formula for $\Sop f$ in the basis.

To compute the Riemannian volume form $\mu$, we note that since the basis is orthonormal, we have $\mu(\Reeb, P_1, \ldots, P_n, Q_1,\ldots, Q_n) = \pm 1$.
Now let $\nu = \contact\wedge (d\contact)^n$; then we need to compute $\nu(\Reeb, P_1,\ldots, P_n, Q_1,\ldots, Q_n)$.
Since $\contact(\Reeb)=1$ and $\contact(P_k)=\contact(Q_k)=0$, we have
$$\nu(\Reeb, P_1,\ldots, P_n, Q_1,\ldots, Q_n) = (d\contact)^n(P_1,\ldots, P_n,Q_1,\ldots, Q_n).$$
In addition we have $d\contact(P_k,Q_k) = \langle P_k, \phi Q_k\rangle = 1$ for each $k$, so that
$$
(d\contact)^n(P_1,\ldots, P_n, Q_1,\ldots, Q_n) = n d\contact(P_1,Q_1)\cdots d\contact(P_n,Q_n) = n.$$
 Hence $\nu = n \mu$.

Finally to obtain the divergence of $u=\Sop f$, we note that $\Lie_u\contact = \Reeb(f) \contact$; hence we have $\Lie_ud\contact = d\big(\Reeb(f)\contact\big) = d\Reeb(f)\wedge \contact + \Reeb(f) \, d\contact$. By the product rule for Lie derivatives we obtain $\Lie_u(\contact \wedge d\contact) = 2\Reeb(f) \contact \wedge d\contact$, and inductively we get $\Lie_u\nu = (n+1)\Reeb(u) \nu$. In particular since $\Reeb=\Sop(1)$ and $\Reeb(1)=0$, the Reeb field is divergence-free.
\end{proof}

%red
Here are some examples of manifolds with associated metrics.

\begin{exmp}\label{contactexamples}
\mbox{}
\begin{itemize}

\item On $S^1$ with coordinate $\onedparam$ and the basic $1$-form $\contact = d\onedparam$, we have $\Reeb = \frac{\partial}{\partial \onedparam}$ and $\Sop f = f \Reeb$ with $\contactlap f = f-f_{\onedparam\onedparam}$. The group of contactomorphisms is of course all of $\Diff(S^1)$.
\item On any three-dimensional unimodular Lie group~\cite{milnor} with a frame of left-invariant vector fields $\{e_1, e_2, e_3\}$ satisfying $[e_2, e_3] = -e_1$,
denote the dual frame by $\{e_1^{\flat}, e_2^{\flat}, e_3^{\flat}\}$ and let $\alpha = e_1^{\flat}.$ Then the $1$-form $\alpha$ is a contact form since $\alpha\wedge d\alpha(e_1,e_2,e_3) = -\alpha(e_1) \alpha([e_2,e_3]) = 1$. Declaring these fields to be orthonormal, we get an associated metric where $\Sop f = fe_1 -e_3(f) e_2 + e_2(f) e_3$ and $\contactlap = 1 - e_1^2 - e_2^2 - e_3^2$. The $3$-sphere and the Heisenberg group are special cases; in particular on the Heisenberg group the Darboux contact form $dz - y\,dx$ has associated metric $ds^2 = (dz-y\,dx)^2 + dx^2 + dy^2$.
\item On $\mathbb{T}^3$ the $1$-form $\contact = \sin{z} \, dx + \cos{z} \, dy$ is a contact form with the usual flat metric associated, such that $\Sop f =
(f\sin{z} + f_z \cos{z}) \, \partial_x + (f\cos{z} - f_z \sin{z}) \, \partial_y + (-f_x \cos{z} + f_y \sin{z}) \, \partial_z$ and $\contactlap = 1-\partial_x^2 - \partial_y^2 - \partial_z^2$.
\end{itemize}
\end{exmp}

\subsection{The Riemannian structure of the contactomorphism group}

As is usual when studying diffeomorphism groups~\cite{ebinmarsden, misiolekpreston}, the Fr\'echet manifold structure leads to analytical difficulties when studying geometry due to the lack of an Inverse Function Theorem
and to the possibility of non-integrability of vector fields. Hence we enlarge the group to the set of diffeomorphisms $\eta\in \Diff^s(M)$ of Sobolev class $H^s$ for $s>\dim{M}/2+1=n+3/2$, consisting of those maps whose derivatives up to order $s$ are square-integrable in every coordinate chart of compact support. The index $s$ is large enough to ensure by the Sobolev embedding theorem that $\eta$ and $\eta^{-1}$ are both $C^1$. We denote by $\Diffscontact(M)$ the group of Sobolev $H^s$ diffeomorphisms preserving the contact structure; although this is a subgroup of $\Diff^s(M)$, it is \emph{not} a smooth submanifold since $T_{\id}\Diffscontact(M)$ is not a closed subspace of $T_{\id}\Diffs(M)$. (See \cite{omori} and \cite{smolentsev}.) The problem is that $\Sop f$ (as given in coordinates by \eqref{Scontactdarboux}) does not differentiate the function $f$ in all directions: the derivative in the Reeb direction is missing.

Following Omori~\cite{omori}, we resolve this by instead considering $\Diffssemicontact(M) = \{ (\eta, \Lambda)\in \Diffs(M)\ltimes H^s(M)\, \vert \, \eta^*\contact = e^{\Lambda} \contact\}$ as a subgroup of $\Diffssemi(M) = \Diffs(M)\ltimes H^s(M)$. Note that requiring $\Lambda\in H^s(M)$ is not the obvious definition: $\Lambda$ has more smoothness than would be expected automatically since $\eta^*\contact$ is usually only $H^{s-1}$ if $\eta\in \Diffs(M)$.
However it is easy to check that for any $f\in H^{s+1}(M)$, the flow $(\eta(t), \Lambda(t))$ of the vector field $\Ssemiop f$ on $M\times \mathbb{R}$ will satisfy $(\eta(t),\Lambda(t))\in \Diffssemicontact(M)$ for all $t$.
Omori shows (in our notation) that the map $\Psi \colon \Difftilde^s(M) \to H^{s-1}(\Omega^1) \oplus H^{s-1}(\Omega^2)$
defined by
$$ \Psi(\eta, \lambda) = \big(e^{-\Lambda}\eta^*\contact, e^{-\Lambda}(-d\Lambda \wedge \eta^*\contact + \eta^*d\contact)\big)$$
is smooth and that $\Diffssemicontact(M)$ is the inverse image of the regular value $(\contact, d\contact)$, so that it is a smooth submanifold. Because of this, we will work primarily with the padded contactomorphism group, so that our geodesic equation ends up being a smooth ordinary differential equation on a Hilbert manifold.

Another approach to the contactomorphism group appears in Bland-Duchamp~\cite{BD}; they use the Folland-Stein~\cite{FS} topology rather than the usual Sobolev topology, and in this topology the contactomorphism group is a smooth Hilbert submanifold of the diffeomorphism group. The main reason we prefer the present approach is that our geodesic equation ends up having the momentum $m$ in \eqref{main} defined in terms of an elliptic operator rather than a subelliptic operator, and the one-dimensional equation reduces to the Camassa-Holm equation \eqref{camassaholm} which is a smooth ODE rather than
$u_t+3uu_x = 0$ which is not a smooth ODE~\cite{constantinkolev}.

Given a right-invariant Riemannian metric on any Lie group $G$, the geodesic equation for a curve $\eta\colon (a,b)\to G$ may be written generally as
\begin{equation}\label{eulerarnold}
\frac{d\eta}{dt} = dR_{\eta(t)} u(t), \qquad \frac{du}{dt} + \ad_u^*u = 0
\end{equation}
where the second equation is called the Euler-Arnold equation. See for example \cite{KLMP2} for a survey of such equations. The most famous examples are the Euler equations for an ideal fluid if $G$ is the group of volume-preserving diffeomorphisms, and the Korteweg-deVries and Camassa-Holm equations when $G$ is $\Diff(S^1)$ or its central extension.
In our case, the simplest right-invariant Riemannian metric on the semidirect product $\Diffsemi(M)$ is given at the identity by
\begin{equation}\label{rightinvmetric}
\llangle (u, \lambda), (v, \rho)\rrangle = \int_M \langle u, v\rangle \, d\mu + \int_M \lambda \rho \, d\mu.
\end{equation}
The Euler-Arnold equation on a semidirect product has been studied in \cite{epdiff} and \cite{vizman} in general, and in special cases such as the ``two-component generalizations'' of some well-known one-dimensional Euler-Arnold equations; see for example \cite{lenellswunsch} and references therein.

The metric \eqref{rightinvmetric} induces a right-invariant metric on the submanifold $\Diffsemicontact(M)$ which at the identity takes the form
\begin{equation}\label{contactometric}
\llangle
\Ssemiop f, \Ssemiop g
\rrangle = \int_M \langle \Sop f, \Sop g\rangle \, d\mu + \int_M \Reeb(f) \Reeb(g)\, d\mu = \int_M (f-\Laplacian f) g \, d\mu,
\end{equation}
as computed in Proposition \ref{associatedprop} for an associated metric. This metric of course
gives the same topology as the Sobolev $H^1$ metric on functions.
More generally (if the Riemannian metric is not associated), the metric \eqref{contactometric} becomes
$$ \llangle \Ssemiop f, \Ssemiop g\rrangle = \int_M m g \, d\mu,$$
where $m=\Ssemiopstar\Ssemiop f$ is the contact Laplacian (a positive-definite elliptic operator), and the metric induced on functions is
topologically equivalent
to the Sobolev $H^1$ metric.

We now compute the Euler-Arnold equation on the contactomorphism group.

\begin{proposition}\label{derivationprop}
Suppose $M$ is a contact manifold with an associated Riemannian metric. Then the Euler-Arnold equation \eqref{eulerarnold} on $\Diffsemicontact(M)$ with right-invariant metric \eqref{contactometric} is given by \eqref{main}, where $m = \contactlap f=f-\Laplacian f$.
\end{proposition}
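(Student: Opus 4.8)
The plan is to apply the abstract Euler-Arnold equation \eqref{eulerarnold} directly to $G=\Diffsemicontact(M)$, using the identification of its Lie algebra with $C^\infty(M)$ via $f\mapsto\Ssemiop f$ from Proposition \ref{Scontactdef}. Under this identification the Lie bracket is the contact Poisson bracket $\{f,g\}=\Sop f(g)-g\,\Reeb(f)$ of \eqref{contactpoisson}, and the right-invariant metric \eqref{contactometric} becomes the symmetric pairing
\[
\llangle \Ssemiop h,\Ssemiop k\rrangle=\int_M (h-\Laplacian h)\,k\,d\mu=\int_M (\contactlap h)\,k\,d\mu
\]
on $C^\infty(M)$ (symmetric because $\contactlap$ is formally self-adjoint); call $\contactlap h$ the ``momentum'' of $h$. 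The whole proof then reduces to computing the coadjoint operator $\ad^*$ of the bracket \eqref{contactpoisson} relative to this pairing, which is a single integration by parts on the closed manifold $M$.

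Fixing $f$ and writing $u=\Sop f$, I would compute, for arbitrary $g$ and $h$,
\[
\llangle \Ssemiop h,\Ssemiop\{f,g\}\rrangle=\int_M (\contactlap h)\,\big(u(g)-g\,\Reeb(f)\big)\,d\mu,
\]
and integrate the first term by parts via $\int_M (\contactlap h)\,u(g)\,d\mu=-\int_M\big(u(\contactlap h)+(\contactlap h)\,\diver u\big)\,g\,d\mu$; the essential input here is the divergence identity $\diver\Sop f=(n+1)\Reeb(f)$ from \eqref{divergenceSop}. Collecting the two contributions yields
\[
\llangle \Ssemiop h,\Ssemiop\{f,g\}\rrangle=-\int_M\Big(u(\contactlap h)+(n+2)\,(\contactlap h)\,\Reeb(f)\Big)\,g\,d\mu,
\]
so that, once the sign convention below is accounted for, the momentum of $\ad^*_f h$ equals $u(\contactlap h)+(n+2)\,(\contactlap h)\,\Reeb(f)$. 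Setting $h=f$ so that $\contactlap h=m$, substituting into $\tfrac{du}{dt}+\ad^*_u u=0$, and applying $\contactlap$ (which commutes with $\partial_t$) produces exactly $m_t+u(m)+(n+2)\,m\,\Reeb(f)=0$, which is \eqref{main}.

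The one point needing genuine care is the coefficient $n+2$: it is $(n+1)+1$, where the $n+1$ comes from $\diver\Sop f$ in \eqref{divergenceSop} and the extra $1$ from the non-Leibniz term $-g\,\Reeb(f)$ in \eqref{contactpoisson} --- equivalently, from the $\Reeb f$ slot that $\Ssemiop$ carries in addition to $\Sop f$, coupled with the $\int_M\Reeb(f)\Reeb(g)\,d\mu$ term in \eqref{contactometric} that supplied the factor $\contactlap$. The remaining issue is purely one of sign: as is standard for diffeomorphism groups, the operator $\ad$ in \eqref{eulerarnold} is the negative of the Jacobi-Lie bracket appearing in \eqref{contactpoisson}, which is what makes $\ad^*_f$ come out with the sign needed for \eqref{main}; a reassuring check is the one-dimensional case $M=S^1$, where $n=0$ and $\Sop f=f\Reeb$, so that \eqref{main} reduces to the Camassa-Holm equation \eqref{camassaholm}. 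Finally, associatedness of the metric enters only through the identities $m=\contactlap f$ and \eqref{divergenceSop}, so verbatim the same computation gives \eqref{main} for any compatible metric with $m=\Ssemiopstar\Ssemiop f$.
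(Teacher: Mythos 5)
Your proposal is correct and follows essentially the same route as the paper: identify the Lie algebra with $C^\infty(M)$ via $\Ssemiop$, pair against an arbitrary $\Ssemiop g$ using the bracket \eqref{contactpoisson}, integrate by parts with the divergence identity \eqref{divergenceSop} to extract $\ad^*$, and invoke the sign convention that $\ad$ is the negative of the Lie bracket. Your bookkeeping of the coefficient $n+2$ as $(n+1)+1$ and the closing remark about non-associated metrics match the paper's computation and surrounding discussion.
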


\begin{proof}
We just need to compute $\ad_{\tilde{u}}^*\tilde{u}$, where $\tilde{u} = \Ssemiop f$ for some stream function $f$. Let $\tilde{v} = \Ssemiop g$; then we have
\begin{multline*}
\llangle \ad_{\tilde{u}}^*\tilde{u}, \tilde{v}\rrangle = \llangle \tilde{u}, \ad_{\tilde{u}}\tilde{v}\rrangle = -\llangle \Ssemiop f, \Ssemiop\{f,g\} \rrangle = -\int_M \{f,g\} \contactlap f \, d\mu \\
= -\int_M \big[ \Sop f(g) - g\Reeb(f)\big] m \, d\mu = \int_M g\big[\diver{\big( m \Sop f\big)} + m \Reeb(f)\big] \, d\mu,
\end{multline*}
using the formula \eqref{contactpoisson} and the fact that the Lie algebra adjoint is the negative of the usual Lie bracket of vector fields.

%It remains to compute the divergence of $\Sop f$. By Proposition \ref{associatedprop}, if $\nu = \contact \wedge (d\contact)^n$ then we have
%$$ \Lie_{\Sop f} \nu = (\diver{\Sop f}) \, \nu.$$
%Using the product rule for Lie derivatives and the fact that $\Lie_{\Sop f}\contact = \Reeb(f) \contact$, it is easy to compute that
%\begin{equation}\label{divergencecontact}
%\diver{(\Sop f)} = (n+1) \Reeb(f).
%\end{equation}
By Proposition \ref{associatedprop} we have $\diver{(\Sop f)} = (n+1) \Reeb(f)$. Hence we have
$$ \llangle \ad_{\tilde{u}}^*\tilde{u}, \tilde{v}\rrangle = \int_M g \big( \Sop f(m) + (n+2) m\Reeb(f) \big) \, d\mu,$$
where $m = \contactlap f$.
We obtain $\ad_{\tilde{u}}^*\tilde{u} = \Ssemiop \contactlapinv\big[ \Sop f(m) + (n+2) m \Reeb(f) \big]$, and applying $\Ssemiopstar$ to the second equation of \eqref{eulerarnold}, we obtain \eqref{main}.
\end{proof}

Every Euler-Arnold equation on a group $G$ has a conservation law (which reflects the symmetry obtained by the Noether theorem~\cite{AMR} resulting from right-invariance of the metric). In general this comes from rewriting \eqref{eulerarnold} as
$ \frac{d}{dt} \Ad_{\eta(t)}^* u(t) = 0$ to obtain
\begin{equation}\label{vorticitylawgeneral}
u(t) = \Ad_{\eta(t)^{-1}}^*u_0.
\end{equation}
This with the flow equation leads to a first-order equation on the group $G$ given by
\begin{equation}\label{generalconservation}
\frac{d\eta}{dt} = dR_{\eta(t)} \Ad_{\eta(t)^{-1}}^* u_0, \qquad \eta(0)=\id,
\end{equation}
where $u_0\in T_eG$ is the initial velocity. For ideal fluid mechanics, equation \eqref{vorticitylawgeneral} expresses conservation of vorticity; for the Camassa-Holm equation \eqref{camassaholm} it expresses the conservation of the momentum $m=f-f_{\onedparam\onedparam}$ in the form
\begin{equation}\label{camassaconservation}
 m(t,\eta(t,\onedparam)) = m_0(\onedparam)/\eta_{\onedparam}(t,\onedparam)^2,
\end{equation}
where $m_0\colon S^1\to\mathbb{R}$ is the initial momentum and $\eta(t)\in \Diff(S^1)$ is the Lagrangian flow.
In particular if $m_0$ is of one sign, then $m(t)$ is always of the same sign. A well-known result due to McKean~\cite{mckean} is that the Camassa-Holm equation on the circle has global solutions if and only if the momentum never changes sign; if it does change sign, solutions $u$ blow up in finite time due to $\eta$ ceasing to be a diffeomorphism. The following lemma relating the Jacobian determinant of $\eta$ to the scaling factor $\Lambda$ will be useful.

\begin{lemma}\label{jacobianlemma}
Suppose the Riemannian volume form $\mu$ is a constant multiple of the contact volume form $\nu = \contact \wedge (d\contact)^n$, as for example happens when the Riemannian metric is associated (Proposition \ref{associatedprop}). Then whenever $\eta^*\contact = e^{\Lambda}\contact$, the Jacobian determinant $\Jac(\eta)$ defined by $\eta^*\mu = \Jac(\eta)\mu$ will satisfy
\begin{equation}\label{jacobiancontact}
\Jac(\eta) = e^{(n+1)\Lambda}.
\end{equation}
\end{lemma}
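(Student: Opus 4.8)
The plan is to compute $\eta^*\nu$ for $\nu = \contact\wedge(d\contact)^n$ directly in terms of $\Lambda$, using only naturality of pullback with respect to $\wedge$ and $d$, and then transfer the conclusion to $\mu$ via the constant proportionality factor. First I would write
$$
\eta^*\nu = \eta^*\big(\contact\wedge(d\contact)^n\big) = (\eta^*\contact)\wedge(\eta^*d\contact)^n = (\eta^*\contact)\wedge\big(d\,\eta^*\contact\big)^n,
$$
using that $\eta^*$ commutes with $d$ and distributes over $\wedge$. Substituting the hypothesis $\eta^*\contact = e^{\Lambda}\contact$ gives $\eta^*\nu = (e^{\Lambda}\contact)\wedge\big(d(e^{\Lambda}\contact)\big)^n$.

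Next I would expand $d(e^{\Lambda}\contact) = e^{\Lambda}\,d\Lambda\wedge\contact + e^{\Lambda}\,d\contact = e^{\Lambda}(d\Lambda\wedge\contact + d\contact)$ and raise it to the $n$th power. Since $d\Lambda\wedge\contact$ and $d\contact$ are both $2$-forms they commute, so the binomial theorem applies; every term containing two or more factors of $d\Lambda\wedge\contact$ vanishes because it contains $\contact\wedge\contact = 0$. Wedging the result on the left with $e^{\Lambda}\contact$ then also annihilates the single remaining cross term $n\,(d\Lambda\wedge\contact)\wedge(d\contact)^{n-1}$, since it too contains a factor of $\contact$. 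What survives is exactly $e^{(n+1)\Lambda}\,\contact\wedge(d\contact)^n = e^{(n+1)\Lambda}\nu$, so $\eta^*\nu = e^{(n+1)\Lambda}\nu$. Finally, since $\mu = c\,\nu$ for a constant $c$ (with $c = 1/n$ in the associated case of Proposition \ref{associatedprop}), we get $\eta^*\mu = c\,\eta^*\nu = e^{(n+1)\Lambda}(c\nu) = e^{(n+1)\Lambda}\mu$, and comparison with the defining relation $\eta^*\mu = \Jac(\eta)\mu$ yields $\Jac(\eta) = e^{(n+1)\Lambda}$.

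I do not anticipate a serious obstacle: the computation is short and essentially formal. The one place to be careful is the bookkeeping in the binomial expansion—being explicit that commutativity of $2$-forms legitimizes the expansion and that all but one term drop out for the stated parity reason. It is also worth flagging where constancy of $c$ is used: if $\mu = F\nu$ for a nonconstant $F$ one would instead obtain $\Jac(\eta) = (F\circ\eta)\,e^{(n+1)\Lambda}/F$, so the hypothesis that $\mu$ is a \emph{constant} multiple of $\nu$ is exactly what makes the clean formula \eqref{jacobiancontact} hold.
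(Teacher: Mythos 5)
Your proof is correct and follows essentially the same route as the paper's: both use naturality of pullback under $d$ and $\wedge$, substitute $\eta^*\contact = e^{\Lambda}\contact$, and observe that every term containing an extra factor of $\contact$ dies against the leading $\contact$; the paper organizes the expansion as an induction on $n$ starting from $\eta^*(\contact\wedge d\contact)=e^{2\Lambda}\contact\wedge d\contact$, whereas you do the binomial expansion in one step, which is a cosmetic difference. Your closing remark about why constancy of the proportionality factor is needed is accurate and a nice addition.
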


\begin{proof}
Since $\eta^*\contact = e^{\Lambda}\contact$, we have $\eta^*(d\contact) = d(\eta^*\contact) = e^{\Lambda}(d\contact + d\Lambda \wedge \contact)$. Now we have
$$
\eta^*(\contact \wedge d\contact) = (\eta^*\contact) \wedge (\eta^*d\contact) = e^{2\Lambda} (\contact\wedge d\contact - \contact\wedge \contact \wedge d\Lambda) = e^{2\Lambda} \contact \wedge d\contact.$$
Inductively we obtain $\eta^*\nu = e^{(n+1)\Lambda} \nu$, and since $\mu$ is a constant multiple of $\nu$ we have $\Jac(\eta) = e^{(n+1)\Lambda}$. Alternatively this is a consequence of the divergence formula \eqref{divergenceSop}.
\end{proof}

Now we generalize the conservation law \eqref{camassaconservation} to the higher-dimensional situation.

\begin{proposition}\label{conservationlawprop}
Let $M$ be a contact manifold of dimension $2n+1$ with an associated Riemannian metric as in Definition \ref{associateddef}. Then equation \eqref{main} satisfies the conservation law
\begin{equation}\label{conservation}
m\big(t, \eta(t,p)\big) = m_0(p)/\Jac(\eta(t,p))^{(n+2)/(n+1)},
\end{equation}
where $\eta$ is the Lagrangian flow of $u=\Sop f$ and $\Jac(\eta)$ denotes its Jacobian determinant.
\end{proposition}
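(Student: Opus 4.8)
The plan is to establish \eqref{conservation} by differentiating its left-hand side along the Lagrangian flow and invoking equation \eqref{main} directly, rather than going through the abstract coadjoint conservation law \eqref{vorticitylawgeneral} (which would require an explicit formula for $\Ad^*$ on $\Diffsemicontact(M)$). Write $\eta(t)$ for the flow of the time-dependent contact vector field $u(t) = \Sop f(t)$, so that $\partial_t\eta(t,p) = u\big(t,\eta(t,p)\big)$ and $\eta(0,p)=p$, and set $M(t,p) := m\big(t,\eta(t,p)\big)$ and $J(t,p) := \Jac\big(\eta(t,p)\big)$. First I would compute the material derivative of the momentum: by the chain rule and \eqref{main},
$$\partial_t M(t,p) = \big(m_t + u(m)\big)\big(t,\eta(t,p)\big) = -(n+2)\,M(t,p)\,\Reeb(f)\big(t,\eta(t,p)\big).$$

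The next step is to express the Reeb-derivative term appearing here in terms of $J$. Writing the scaling relation as $\eta(t)^*\contact = e^{\Lambda(t)}\contact$ and differentiating in $t$, using $\tfrac{d}{dt}\eta(t)^*\contact = \eta(t)^*\big(\Lie_{u(t)}\contact\big)$ together with $\Lie_{u(t)}\contact = \Reeb(f(t))\,\contact$ from Proposition \ref{Scontactdef}, one obtains
$$\partial_t\Lambda(t,p) = \Reeb(f)\big(t,\eta(t,p)\big).$$
Combined with Lemma \ref{jacobianlemma}, which gives $J(t,p) = e^{(n+1)\Lambda(t,p)} > 0$, this yields $\partial_t J(t,p) = (n+1)\,J(t,p)\,\Reeb(f)\big(t,\eta(t,p)\big)$.

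Finally I would combine the two identities. Forming $F(t,p) := M(t,p)\,J(t,p)^{(n+2)/(n+1)}$ and differentiating by the product rule, the contribution from $\partial_t M$ and that from $\partial_t J$ cancel exactly, since $\tfrac{n+2}{n+1}\cdot(n+1) = n+2$ and $\tfrac{1}{n+1} + 1 = \tfrac{n+2}{n+1}$; hence $F$ is independent of $t$. Since $\eta(0)=\id$ gives $J(0,p)=1$ and $M(0,p) = m_0(p)$, we conclude $F(t,p) \equiv m_0(p)$, which is precisely \eqref{conservation}. I would carry out this last step in the product form above rather than with logarithms, since $m$ need not have a fixed sign — this is exactly the phenomenon behind the blow-up behavior discussed after \eqref{camassaconservation}. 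There is no serious obstacle in the argument; the one identity carrying essentially all the content is $\partial_t\Lambda(t,p) = \Reeb(f)(t,\eta(t,p))$, and the only thing to be careful about is keeping the spatial base point $\eta(t,p)$ straight when passing between the Eulerian equation \eqref{main} and the Lagrangian picture.
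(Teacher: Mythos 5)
Your proof is correct, but it is not the route the paper takes. The paper proves the proposition by computing the coadjoint action explicitly on the padded group: it derives the formula $\Ad_{(\eta,\Lambda)}\Ssemiop g = \Ssemiop\big((e^{\Lambda}g)\circ\eta^{-1}\big)$ from the group law \eqref{grouplaw}, dualizes using the change-of-variables formula and Lemma \ref{jacobianlemma} to get $\Ad_{(\eta,\Lambda)}^*\Ssemiop f = \Ssemiop\contactlap^{-1}\big((m\circ\eta)e^{(n+2)\Lambda}\big)$, and then reads off \eqref{conservation} from the general Euler--Arnold conservation law $\Ad_{\eta(t)}^*u(t)=u_0$ in \eqref{vorticitylawgeneral}. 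Your argument instead differentiates $m\circ\eta$ along the flow, uses $\partial_t\Lambda = \Reeb(f)\circ\eta$ together with $\Jac(\eta)=e^{(n+1)\Lambda}$, and checks that $M\,J^{(n+2)/(n+1)}$ is constant; the exponents match and the initial condition gives the claim. In fact the authors explicitly acknowledge your route in the remark immediately following their proof (``Of course, we could also have derived \eqref{conservation} directly by writing the Lagrangian flow in the form\dots''), so you have supplied the details of the alternative they sketch. What the paper's approach buys is the explicit $\Ad^*$ formula \eqref{adstar}, which exhibits \eqref{conservation} as an instance of the general momentum/vorticity transport law and is what feeds into the first-order reformulation used in Theorem \ref{mainthm1}; what your approach buys is economy --- no computation of the adjoint or coadjoint representation is needed, only the pointwise identity $\partial_t\Lambda(t,p)=\Reeb(f)\big(t,\eta(t,p)\big)$ and Lemma \ref{jacobianlemma}. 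Your choice to work with the product $M\,J^{(n+2)/(n+1)}$ rather than logarithms, since $m$ may change sign, is also the right care to take.
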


\begin{proof}
For any $(\eta,\Lambda)\in \Diffsemicontact(M)$ and $(v,\phi)\in T_{\id}\Diffsemicontact(M)$, we have by definition that $\Ad_{(\eta, \Lambda)}(v,\phi) = \frac{d}{dt}\big|_{t=0} (\eta,\Lambda)\star \big(\xi(t), \Phi(t)\big) \star (\eta,\Lambda)^{-1}$, where $\big(\xi(t), \Phi(t)\big)$ is any curve satisfying $\frac{d}{dt}\big|_{t=0} \big(\xi(t), \Phi(t)\big) = (v,\phi)$. From the formula \eqref{grouplaw} for the group law, it is easy to compute that
$$ \Ad_{(\eta, \Lambda)}(v,\phi)
%= \frac{d}{dt}\Big|_{t=0} (\eta\circ\xi(t), \Lambda\circ\xi(t) + \Phi(t))\star (\eta^{-1}, -\Lambda\circ\eta^{-1}) =
%\frac{d}{dt}\Big|_{t=0} \big( \eta\circ\xi(t)\circ\eta^{-1}, \Lambda\circ\xi(t)\circ\eta^{-1}+\Phi(t)\circ\eta^{-1}-\Lambda\circ\eta^{-1}\big)
= (\Ad_{\eta}v, v(\Lambda)\circ \eta^{-1} + \phi\circ\eta^{-1}),$$
where $\Ad_{\eta}v$ is the usual adjoint operator on the diffeomorphism group. We now need to compute what this is when $\eta$ is a contactomorphism and $v=\Sop g$ is a contact vector field. Since $\Ad_{\eta}v$ will also be a contact vector field, we must have $\Ad_{\eta}\Sop g = \Sop h$ where $h = \contact(\Ad_{\eta}v)$.
%We may compute this using (at each $p\in M$) that
%$$h(p) = \contact_p(\eta_*v\circ\eta^{-1}(p)) = (\eta^*\contact)\big(v\circ\eta^{-1}(p)\big) = e^{\Lambda(\eta^{-1}(p))} \contact\big( v\circ\eta^{-1}(p)\big)
%= e^{\Lambda(\eta^{-1}(p))} g(\eta^{-1}(p)) = e^{\Lambda}g(\eta^{-1}(p)).$$
Using $\eta^*\contact = e^{\Lambda}\contact$ and the formula $\Ad_{\eta}v = \eta_*v\circ\eta^{-1}$, it is easy to verify that  $h = (e^{\Lambda}g)\circ\eta^{-1}$.

We thus compute
\begin{align*}
\big\llangle \Ad_{(\eta, \Lambda)}^*(u,\lambda), (v,\phi)\big\rrangle
%= \llangle (u,\lambda), \Ad_{(\eta, \Lambda)}(v,\phi)\rrangle
&= \big\llangle \Ssemiop f, \Ssemiop \big(e^{\Lambda\circ\eta^{-1}} (g\circ\eta^{-1})\big)\big\rrangle \\
%= \int_M (\contactlap f) e^{\Lambda\circ\eta^{-1}} g\circ\eta^{-1} \, d\mu
&= \int_M (\contactlap f) e^{\Lambda} g\circ\eta^{-1} \, d\mu \\
&= \int_M (\contactlap f\circ\eta) e^{(n+2) \Lambda} g \, d\mu,
\end{align*}
using the change of variables formula and Lemma \ref{jacobianlemma}. We conclude that
\begin{equation}\label{adstar}
\Ad_{(\eta, \Lambda)}^*\Ssemiop f = \Ssemiop \contactlap^{-1} \Big((m\circ\eta) e^{(n+2) \Lambda}\Big),
\end{equation}
where $m=\contactlap f$. Applying $\Ssemiopstar$ to both sides of $\Ad_{(\eta(t),\Lambda(t))}^*\Ssemiop f(t) = \Ssemiop f_0$ and using \eqref{jacobiancontact}, we obtain \eqref{conservation}.
\end{proof}

Of course, we could also have derived \eqref{conservation} directly by writing the Lagrangian flow in the form
$$ \frac{\partial \eta}{\partial t}(t,p) = \Sop f\big(t, \eta(t,p)\big), \qquad \frac{\partial \Lambda}{\partial t}(t,p) = \Reeb(f)\big(t,\eta(t,p)\big)$$
and composing \eqref{main} with $\eta$ to obtain
$$ \frac{\partial}{\partial t} m\big(t,\eta(t,p)\big) + (n+2) m(t,\eta(t,p)\big) \frac{\partial \Lambda}{\partial t}(t,p) = 0,$$ which immediately integrates to \eqref{conservation}.
But Proposition \ref{conservationlawprop} makes clear the analogy with vorticity and momentum conservation in the general Euler-Arnold equation.

There are two significant features of the conservation law \eqref{conservation}: the first is that the momentum is a function on the manifold rather than a vector field as it is for the EPDiff equation~\cite{epdiff} (another suggested higher-dimensional version of the Camassa-Holm equation). We may thus conjecture that the sign of the momentum controls global existence of solutions as it does for the Camassa-Holm equation; see \cite{prestonsarria} for an analysis of a closely-related case.  The second is that the equation \eqref{generalconservation} can be shown to be a smooth ODE on the Sobolev manifold $\Diffssemicontact(M)$, following the methods of \cite{ebinfirstorder} or Majda-Bertozzi~\cite{majdabertozzi}. Thus we can avoid the somewhat complicated geometric machinery of \cite{ebinmarsden}. We will do this in the next section.

\section{Local and global existence}

We now restrict our attention to the Hilbert manifold $\Diffssemicontact(M)$, which as noted above is a smooth submanifold of $\Diffssemi(M) = \Diffs(M) \ltimes H^s(M)$. Our right-invariant metric \eqref{contactometric} is induced by the right-invariant metric on $\Diffssemi(M)$, and thus we could in principle use the methods of \cite{ebinmarsden} to prove that the tangential projection is smooth and thus that the geodesic equation is a smooth ODE on $T\Diffssemicontact(M)$. However this relies on the fact that the geodesic equation on $\Diffssemi(M)$ is a smooth ODE, which is probably true but is not proven in the literature to our knowledge. We will therefore work directly on $\Diffssemicontact(M)$ using the conservation law \eqref{generalconservation} to write the geodesic equation as a first-order ODE on $\Diffssemicontact(M)$: we obtain
\begin{equation}
\frac{d}{dt} (\eta, \Lambda) %= \Ad_{(\eta, \Lambda)^{-1}^*\Ssemiof f_0
= (\Ssemiop)_{(\eta, \Lambda)} \contactlapinv_{(\eta, \Lambda)} \big(m_0 e^{-(n+2)\Lambda}\big),
\end{equation}
where the ``twisted operators'' are defined as $(\Ssemiop)_{(\eta, \Lambda)} = dR_{(\eta,\Lambda)}\circ \Ssemiop \circ dR_{(\eta,\Lambda)^{-1}}$ and $\contactlapinv_{(\eta,\Lambda)}= dR_{(\eta,\Lambda)}\circ \contactlapinv\circ dR_{(\eta,\Lambda)^{-1}}$. If we could prove that these twisted operators were smooth in $(\eta,\Lambda)\in \Diffssemicontact(M)$, we would be done. However although $(\Ssemiop)_{(\eta,\Lambda)}\colon H^{s+1}(M)\to T_{(\eta,\Lambda)}\Diffssemi(M)$ is smooth (like all twisted first-order differential operators, as in \cite{ebinmarsden}), the operator $\contactlapinv_{(\eta, \Lambda)}$ is not, and in fact does not even map into the correct space. We need it to map from $H^{s-1}(M)$ to $H^{s+1}(M)$, but it cannot map into $H^{s+1}(M)$ since $\eta$ is only $H^s$.
Instead we use the fact that the operator  $(\Ssemiop)_{(\eta, \Lambda)} \contactlapinv_{(\eta, \Lambda)}$ is the inverse of $\Ssemiopstar_{(\eta, \Lambda)}\colon T_{(\eta,\Lambda)}\Diffssemi(M)\to H^{s-1}(M)$, and $\Ssemiopstar_{(\eta, \Lambda)}$ is smooth in $(\eta, \Lambda)$, using a simplified version of the technique from \cite{ebinmarsden}.

%  The alternative approach is to follow Majda-Bertozzi~\cite{majdabertozzi} to get an explicit formula for $\contactlapinv_{(\eta,\Lambda)}$, but a formula for the kernel of this operator is unwieldy on a general Riemannian manifold.

We will assume the Riemannian metric on $M$ is associated to the contact form to simplify the notation, although this assumption is not necessary to prove the theorem. We assume $M$ is compact in order to use the standard results of Sobolev manifolds of maps (as in \cite{ebinmarsden}).

%The essential issue is to prove smoothness of the twisted operator
%$(\Scontactbar \, \Laplaciancontactbar^{-1})_{\eta}$; the problem is that although the twisted operator $(\Scontactbar)_{\eta}$ is smooth (as a differential operator), the operator $(\Laplaciancontactbar^{-1})_{\eta}$ does not even make sense (since it doesn't map into the correct space). We therefore use the fact that this twisted operator is the inverse of $(\Scontactstarbar)_{\eta}$ on a certain restricted space, together with the inverse function theorem; this is essentially the same technique as in \cite{EM} and \cite{EP}.

\begin{theorem}\label{mainthm1}
Let $M$ be a compact contact manifold with $\dim{M}=2n+1$ and let $s$ be an integer with $s>n+\tfrac{3}{2}$. Assume the Riemannian metric on $M$ is associated to the contact form as in Definition \ref{associateddef}.
%Assume that the Riemannian volume form is a constant multiple of the contact volume form as in Proposition \ref{derivationprop}.
Let $m_0$ be an arbitrary $H^{s-1}$ function on $M$.
Then the velocity field
\begin{equation}\label{Udef}
U := (\eta, \Lambda) \mapsto (\Ssemiop \contactlapinv)_{(\eta, \Lambda)}\Big(e^{-(n+2)\Lambda} m_0\Big)
\end{equation}
defined on the group $\Diffssemicontact(M) = \{ (\eta, \Lambda)\in \Diff^s(M)\ltimes H^s(M)\, \vert \, \eta^*\contact = e^{\Lambda} \contact \} $
is $C^{\infty}$. Hence for any $H^{s+1}$ function $f_0$, there is an $H^s$ geodesic $(\eta(t), \Lambda(t))$ through the identity defined on some (possibly infinite) interval $(-t_b, t_e)$ with $H^s$ initial velocity $(u_0,\lambda_0) = \Ssemiop f_0$.
%(\Scontact f_0, \Reeb(f_0))$.
\end{theorem}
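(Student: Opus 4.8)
The plan is to reduce the entire statement to the assertion that the vector field $U$ of \eqref{Udef} is $C^\infty$ on the Hilbert manifold $\Diffssemicontact(M)$. Granting this, the Picard--Lindel\"of theorem for $C^\infty$ vector fields on Banach manifolds produces, for each $H^{s+1}$ function $f_0$ --- set $m_0 := \contactlap f_0 \in H^{s-1}(M)$, which is arbitrary since $\contactlap\colon H^{s+1}(M)\to H^{s-1}(M)$ is onto --- a unique integral curve $t\mapsto(\eta(t),\Lambda(t))$ of $U$ issuing from $(\id,0)$, $C^\infty$ in $t$ and defined on a maximal interval $(-t_b,t_e)$ (possibly all of $\mathbb{R}$). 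Since $U$ is tangent to $\Diffssemicontact(M)$ the curve stays there, and its initial velocity is $U(\id,0) = \Ssemiop\contactlapinv m_0 = \Ssemiop f_0 \in H^s(M)$. By the reduction carried out just before the theorem --- rewriting the geodesic equation \eqref{eulerarnold} as the first-order system $\tfrac{d}{dt}(\eta,\Lambda) = U(\eta,\Lambda)$ by means of the conservation law \eqref{generalconservation} and formula \eqref{adstar} --- this integral curve is exactly the geodesic through the identity with initial velocity $\Ssemiop f_0$. Thus everything comes down to smoothness of $U$.

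The key structural point is that $U$ never requires differentiating the twisted inverse Laplacian --- which we could not control, because conjugating the second-order operator $\contactlap$ by right translation introduces $D^2\eta\in H^{s-2}$ into the coefficients and $\contactlapinv_{(\eta,\Lambda)}$ cannot map into $H^{s+1}$ when $\eta$ is only $H^s$. Instead $(\Ssemiop\contactlapinv)_{(\eta,\Lambda)}$ is the inverse of a twisted \emph{first-order} operator. Recall from Proposition~\ref{associatedprop} that $\contactlap = \Ssemiopstar\Ssemiop$; hence at the identity $\Ssemiopstar(\Ssemiop\contactlapinv) = \id_{H^{s-1}(M)}$, and $(\Ssemiop\contactlapinv)\Ssemiopstar = \id$ on $T_{\id}\Diffssemicontact(M) = \Ssemiop\bigl(H^{s+1}(M)\bigr)$ since $\Ssemiopstar\Ssemiop g = \contactlap g$. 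Conjugating both relations by $dR_{(\eta,\Lambda)}$, and using that right-invariance (together with the submanifold structure of $\Diffssemicontact(M)$ recalled via Omori above) gives $T_{(\eta,\Lambda)}\Diffssemicontact(M) = dR_{(\eta,\Lambda)}\,T_{\id}\Diffssemicontact(M)$, we conclude that $(\Ssemiop\contactlapinv)_{(\eta,\Lambda)}$ is the two-sided inverse of the restriction
\[
\Ssemiopstar_{(\eta,\Lambda)}\big|_{T_{(\eta,\Lambda)}\Diffssemicontact(M)} \colon\ T_{(\eta,\Lambda)}\Diffssemicontact(M) \longrightarrow H^{s-1}(M).
\]

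I would then carry out the following steps. (i) Show $\Ssemiopstar_{(\eta,\Lambda)} = dR_{(\eta,\Lambda)}\circ\Ssemiopstar\circ dR_{(\eta,\Lambda)^{-1}}$ is a $C^\infty$ map of $(\eta,\Lambda)$ into $\mathcal{L}\bigl(T_{(\eta,\Lambda)}\Diffssemi(M),\,H^{s-1}(M)\bigr)$: in a chart, conjugating the first-order operator $\Ssemiopstar$ by right translation gives a first-order operator whose coefficients are universal polynomial expressions in $(\eta,\Lambda)$, $D\eta$, $(D\eta)^{-1}$ and $D\Lambda$; since $s-1 > n+\tfrac{1}{2} = \tfrac{1}{2}\dim M$ the space $H^{s-1}(M)$ is a Banach algebra, matrix inversion is smooth on invertible matrix-valued $H^{s-1}$ functions, and multiplication $H^{s-1}\times H^{s}\to H^{s-1}$ is bounded bilinear --- the one order of slack that a first- but not a second-order operator affords --- so the coefficients, and hence $\Ssemiopstar_{(\eta,\Lambda)}$, are smooth. (ii) Restrict to the $C^\infty$ subbundle $T\Diffssemicontact(M)$ (automatically smooth because $\Diffssemicontact(M)$ is a $C^\infty$ submanifold); by elliptic theory $\contactlap\colon H^{s+1}(M)\to H^{s-1}(M)$ is an isomorphism, so the displayed restriction is a $C^\infty$ family of Banach-space isomorphisms, and since inversion is smooth on the set of isomorphisms, $(\eta,\Lambda)\mapsto(\Ssemiop\contactlapinv)_{(\eta,\Lambda)}$ is $C^\infty$. (iii) Observe $(\eta,\Lambda)\mapsto e^{-(n+2)\Lambda}m_0$ is $C^\infty$ into $H^{s-1}(M)$ (the entire function $t\mapsto e^{-(n+2)t}$ acts smoothly on the algebra $H^{s}(M)$, and multiplication by the fixed element $m_0$ is bounded $H^s\times H^{s-1}\to H^{s-1}$). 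Then $U$ is a composition of smooth maps and is $C^\infty$.

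The main obstacle is step (i) together with the isomorphism assertion in step (ii): making the Sobolev bookkeeping (the $H^{s-1}$-algebra estimates, no loss of derivatives, correct target space), the surjectivity (solvability of $\contactlap$ from $H^{s+1}$ onto $H^{s-1}$), and the $C^\infty$ subbundle structure all cooperate. Once that is in place, the remaining ingredients --- the identity $\contactlap = \Ssemiopstar\Ssemiop$, smoothness of operator inversion, and the ODE existence theorem --- are essentially formal.
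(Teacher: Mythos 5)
Your proposal is correct and follows essentially the same route as the paper's proof: reduce everything to smoothness of $U$, observe that $(\Ssemiop\contactlapinv)_{(\eta,\Lambda)}$ is the inverse of the twisted \emph{first-order} operator $\Ssemiopstar_{(\eta,\Lambda)}$ restricted to $T_{(\eta,\Lambda)}\Diffssemicontact(M)$, prove smoothness of that twisted operator via the $(D\eta)^{-1}$/Banach-algebra argument, invoke smoothness of operator inversion together with the isomorphism $\contactlap\colon H^{s+1}\to H^{s-1}$, and finish with the local flow theorem for smooth vector fields on Hilbert manifolds. The only cosmetic difference is that the paper writes out $\Ssemiopstar_{(\eta,\Lambda)}$ explicitly in the orthonormal frame $\{\Reeb,P_k,Q_k\}$ rather than arguing in a chart.
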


\begin{proof}
The main idea is that first-order twisted differential operators such as $\Ssemiopstar_{(\eta, \Lambda)}\colon T_{(\eta,\Lambda)}\Diffssemi(M)\to H^{s-1}(M)$ are always smooth as a function of $(\eta, \Lambda)$, as described in \cite{ebinmarsden}. We repeat the argument here for the reader's convenience.

We are dealing with an operator $X_{\eta}(h):= X(h\circ\eta^{-1})\circ\eta$, where $h$ is an $H^s$ function and $X$ is a first-order differential operator with smooth coefficients.
For any such operation we have
%Now the vector fields $\Reeb$, $P_k$, and $Q_k$ are all smooth on $M$, and for any smooth vector field $X$ and $H^s$ function $h$ and $H^s$ diffeomorphism $\eta$, we have
\begin{equation}\label{twistedfirstorder}
X_{\eta}(h)(p) = X(h\circ \eta^{-1}) \circ \eta\big|_p  = dh_p\big( (D\eta_p)^{-1} X_{\eta(p)}\big) \quad \forall p\in M.
\end{equation}
If $X$ is a smooth vector field, then the composition $\eta\mapsto X\circ\eta$ is smooth in $\eta$ as long as $\eta\in H^s$ with $s > \tfrac{1}{2}\dim{M} + 1$~\cite{ebinthesis}. In addition the operation $\eta\mapsto (D\eta)^{-1}$ is smooth on the group of diffeomorphisms $\eta$ of the same Sobolev class, since it can be expressed in terms of multiplication (the cofactors) and division by a nowhere-zero function. Since multiplication of $H^{s-1}$ functions is also smooth in each component, the expression $\eta\mapsto X_{\eta}(h)$ given by \eqref{twistedfirstorder} is a smooth function of $\eta\in \Diffs$ and $h\in H^s(M)$.

Recall that elements $T_{(\eta,\Lambda)}\Diffssemi(M)$ are of the form $(v,\rho)$ where $v\in H^s(M,TM)$ and $\rho\in H^s(M,\mathbb{R})$ with $v(p)\in T_{\eta(p)}M$ for each $p\in M$. We can express $v$ in terms of the frame $\{P_k, Q_k, \Reeb\}$ from Proposition \ref{associatedprop} as
\begin{equation}\label{Hstanvec}
v = a \Reeb\circ\eta + \sum_k \big[b_k (P_k\!\circ\!\eta) + c_k (Q_k\!\circ\!\eta)\big]
\end{equation}
for some $H^s$ coefficient functions $\{a, b_k, c_k\}$.
%Recalling that the right translation differential is given by $dR_{(\eta,\Lambda)^{-1}}(v,\rho) = (v\circ\eta^{-1}, \rho\circ\eta^{-1})$, we see
The formal adjoint $\Ssemiopstar$ of $\Ssemiop$ is easy to compute, and thus we find that the twisted operator
$$(\Ssemiopstar)_{(\eta,\Lambda)} = dR_{(\eta,\Lambda)} \circ \Ssemiopstar \circ dR_{(\eta,\Lambda)^{-1}}\colon T_{(\eta,\Lambda)}\Diffssemi(M) \to H^{s-1}(M)$$ looks as follows:% on $(v,\rho)\in T_{(\eta,\Lambda)}\Diffssemi(M)$ with $v$ expressed as in \eqref{Hstanvec}:
%\begin{equation}\label{Ssemiopstarexplicit}
%\begin{split}
\begin{multline}\label{Ssemiopstarexplicit}
(\Ssemiopstar)_{(\eta,\Lambda)}(v,\rho) =
a + \textstyle \sum_k \big((\diver{Q_k}) b_k - (\diver{P_k}) c_k\big)
\\-\Reeb(\rho\circ\eta^{-1})\circ\eta
+ \textstyle\sum_k \big[ Q_k(b_k\circ\eta^{-1})\circ \eta - P_k(c_k\circ\eta^{-1})\circ\eta\big].
\end{multline}
This is thus smooth in the coefficients $(a, b_k, c_k, \rho) \in H^s(M,\mathbb{R})$ by the computation above.

The restriction of each $(\Ssemiopstar)_{(\eta,\Lambda)}$ to $T_{(\eta,\Lambda)}\Diffssemicontact(M)$ is still smooth in $(\eta,\Lambda)$ since $\Diffssemicontact(M)$ is a smooth submanifold of $\Diffssemi(M)$. On this subspace $\Ssemiopstar$ is an isomorphism since $\contactlap=\Ssemiopstar \Ssemiop$ is an isomorphism from $H^{s+1}(M)$ to $H^{s-1}(M)$. The operation  which inverts a linear operator in a vector space is of course smooth, and thus $(\Ssemiopstar)_{(\eta,\Lambda)}^{-1}(h) = (\Ssemiop\contactlapinv)_{(\eta,\Lambda)}(h)$ is smooth in $(\eta,\Lambda)$ for any $h\in H^{s-1}(M)$. The other operations appearing in \eqref{Udef} involve only multiplication and composition with smooth functions, and thus the vector field $U$ is smooth on $\Diffssemicontact(M)$.

Existence of solutions then follows from the usual existence of a local flow for smooth vector fields on smooth Hilbert manifolds, via a Picard iteration argument (see e.g., Lang~\cite{lang}).
\end{proof}

The argument in Theorem \ref{mainthm1} gives existence of short-time solutions $\big(\eta(t),\Lambda(t)\big) \in \Diffssemicontact(M)$ starting at the identity $(\id,0)$ for any initial velocity $(u_0,\lambda_0) = \Ssemiop f_0$ and any $H^{s+1}$ function $f_0$.

Since the curve $(\eta,\Lambda)$ is an integral curve of a smooth vector field, we have smooth dependence on time $t$, and thus the velocity $(\dot{\eta}(t), \dot{\Lambda}(t))$ is an element of $T_{(\eta,\Lambda)}\Diffssemicontact(M)$. Right-translating to the identity, we obtain an $H^s$ vector field $u(t) = \dot{\eta}(t)\circ\eta(t)^{-1}$ which solves the Euler-Arnold equation \eqref{main}.

With this we can proceed to construct a smooth exponential map for $\Diffssemicontact(M)$, as follows.

\begin{corollary}\label{riemannianexp}
Under the conditions of Theorem \ref{mainthm1}, there is a smooth Riemannian exponential map which takes sufficiently small tangent vectors $\Ssemiop f_0\in T_{\id}\Diffssemicontact(M)$ to the time-one solution $\big(\eta(1), \Lambda(1)\big)\in \Diffssemicontact(M)$. By the inverse function theorem on Hilbert manifolds, this exponential map is locally invertible. Hence sufficiently close elements of $\Diffssemicontact(M)$ may be joined by a unique minimizing unit-speed geodesic.
\end{corollary}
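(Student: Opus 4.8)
The plan is to realize the exponential map as the time-one geodesic flow furnished by Theorem~\ref{mainthm1}, compute its differential at the origin, and then invoke the inverse function theorem on Hilbert manifolds together with the Gauss-lemma argument for local minimality. First I would note that $T_{\id}\Diffssemicontact(M)$ is canonically the Hilbert space $H^{s+1}(M)$ via $f_0 \mapsto \Ssemiop f_0$, and that $f_0 \mapsto m_0 = \contactlap f_0$ is a linear isomorphism onto $H^{s-1}(M)$. Theorem~\ref{mainthm1} produces, for each such $m_0$, a smooth vector field $U = U_{m_0}$ on $\Diffssemicontact(M)$ whose integral curve through $(\id,0)$ is the geodesic with initial velocity $\Ssemiop f_0$ (since \eqref{generalconservation} is equivalent to the geodesic equation \eqref{eulerarnold}). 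Because $U_{m_0}$ depends smoothly --- in fact affinely, through $\contactlapinv$ and multiplication by $e^{-(n+2)\Lambda}$ --- on the parameter $m_0$, the parametrized flow $(m_0,t) \mapsto (\eta(t),\Lambda(t))$ is smooth on its open domain, by the standard smooth dependence of ODE solutions on initial conditions and parameters (reduce the parameter to an extra coordinate with trivial dynamics and apply the usual theory on the Hilbert manifold $H^{s-1}(M)\times\Diffssemicontact(M)$). Since the zero vector yields the stationary geodesic, defined for all time, continuous dependence gives an open neighborhood $\mathcal{V} \ni 0$ in $T_{\id}\Diffssemicontact(M)$ on which the geodesic runs up to $t=1$; one then defines $\exp_{\id}(\Ssemiop f_0) = (\eta(1),\Lambda(1))$, a smooth map on $\mathcal{V}$.

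Next I would compute $d(\exp_{\id})_0$. Using the rescaling identity $\gamma_{sv}(t)=\gamma_v(st)$ for geodesics (valid for small $s$ by continuous dependence), $\exp_{\id}(sv)=\gamma_v(s)$, so $\frac{d}{ds}\big|_{s=0}\exp_{\id}(sv)=\dot\gamma_v(0)=v$; thus $d(\exp_{\id})_0$ is the identity of $T_{\id}\Diffssemicontact(M)$, in particular a toplinear isomorphism. The inverse function theorem on Hilbert manifolds (e.g. Lang) then gives open sets $\mathcal{V}_0\ni 0$ and $\mathcal{W}\ni(\id,0)$ with $\exp_{\id}\colon\mathcal{V}_0\to\mathcal{W}$ a diffeomorphism, so every point of $\mathcal{W}$ is the endpoint of exactly one geodesic issuing from $(\id,0)$ with initial velocity in $\mathcal{V}_0$. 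For minimality and uniqueness I would invoke the Gauss lemma, $\langle d(\exp_{\id})_v v,\, d(\exp_{\id})_v w\rangle=\langle v,w\rangle$, which is a purely infinitesimal consequence of the first variation formula and therefore holds verbatim for the weak metric \eqref{contactometric}; then the usual estimate --- writing a competing curve in $\mathcal{W}$ from $(\id,0)$ to a given point as $t\mapsto\exp_{\id}(r(t)\zeta(t))$ with $\langle\zeta,\zeta\rangle\equiv 1$ and bounding its length below by $\int|r'(t)|\,dt$, with equality only on the radial geodesic --- gives the unique shortest curve joining the two points inside $\mathcal{W}$. Reparametrizing by arclength makes it unit-speed, and right-invariance of \eqref{contactometric} transports the conclusion: $(\eta_1,\Lambda_1)$ and $(\eta_2,\Lambda_2)$ are joined by a unique minimizing geodesic whenever $(\eta_1,\Lambda_1)\star(\eta_2,\Lambda_2)^{-1}\in\mathcal{W}$, that is, whenever they are sufficiently close.

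The step I expect to be the main obstacle is the minimization claim, precisely because \eqref{contactometric} is a \emph{weak} Riemannian metric: its $H^1$ topology is strictly coarser than the $H^{s+1}$ Hilbert-manifold topology, so one must be careful about what ``minimizing'' means. The Gauss-lemma comparison above shows the geodesic is shortest among curves that remain in the normal neighborhood $\mathcal{W}$, and the finite-dimensional proof carries over there without change because length is a continuous functional and $\exp_{\id}$ is an honest diffeomorphism onto an open set. Upgrading this to genuine (not merely local) minimality among all $H^{s+1}$ curves would require a lower bound on the length of curves that escape $\mathcal{W}$, and controlling such escaping curves in the weak metric is the delicate point that would need the most attention; I would either establish such a bound or state the minimality only within the normal neighborhood.
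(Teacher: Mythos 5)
Your construction of the exponential map coincides with the paper's: both arguments rest on the smoothness of the geodesic field in the initial momentum $m_0$ (smooth dependence of ODE solutions on parameters on the Hilbert manifold), obtain a flow defined on a product of a ball in $T_{\id}\Diffssemicontact(M)$ with a time interval, and extend to $t=1$ either by the openness of the flow domain around the stationary solution or by the rescaling $\gamma_{sv}(t)=\gamma_v(st)$ (the paper uses the latter; you use both, which is equivalent). The difference is that the paper's proof stops there, leaving the remaining claims of the statement unargued, whereas you go on to verify them: you compute $d(\exp_{\id})_0=\mathrm{id}$ from the rescaling identity and apply the inverse function theorem, and you address the minimization claim via the Gauss lemma. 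Your closing caveat is well taken and is the genuinely delicate point: since \eqref{contactometric} is a weak Riemannian metric whose $H^1$ topology is strictly coarser than the $H^{s+1}$ manifold topology, the Gauss-lemma comparison yields minimality only among competing curves confined to the normal neighborhood, and excluding shorter curves that escape it would need a separate lower bound that neither your argument nor the paper supplies. The assertion ``unique minimizing unit-speed geodesic'' should therefore be read in this local sense (as is standard for weak metrics on diffeomorphism groups, e.g.\ in \cite{ebinmarsden}), and your decision to state minimality only within the normal neighborhood is the correct resolution. In short: no gap, same core argument, and you prove more of the statement than the paper's own proof does.
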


\begin{proof}
To obtain this map, we examine the dependence of
$(\eta(t),\Lambda(t))$ on its initial data.  First we note that if
\begin{equation}\label{Utildadef}
\widetilde{U} := (\eta, \Lambda, u_0, \lambda_0) \mapsto (\Ssemiop \contactlapinv)_{(\eta, \Lambda)}\Big(e^{-(n+2)\Lambda} m_0\Big)
\end{equation}
as in \eqref{Udef}, then $\widetilde{U}$ is smooth in all its arguments, so for any $t$, $(\eta(t), \Lambda(t))$ is a smooth function of $(u_0, \lambda_0,t).  $  This function is defined on a neighborhood of $(\tilde{0},0)$ in $T_{id}\Diffssemicontact(M) \times \mathbb{R}$ where $\tilde{0}$ is the zero vector in $T_{id}\Diffssemicontact(M)$.
Thus there is a ball $B_{2\delta}$ about $\tilde{0}$ of radius $2\delta$ and an interval $(-\epsilon, \epsilon)$ such that $(\eta(t), \Lambda(t))$ the solution of
\eqref{Udef}, with initial data in $B_{2\delta}$, is defined for $t \in (-\epsilon, \epsilon).$  But since $(\eta(t),\Lambda(t))$ is a geodesic, we find that for any fixed $r$, $ t \rightarrow (\eta(rt),\Lambda(rt))$ is also a geodesic which of course is defined for
$|t| < \epsilon/r.$ Hence for any initial vector in $B_{\delta r},$ we get a geodesic defined for $|t|<2.$ The value of the exponential map is then defined to be $(\eta(1),\Lambda(1)).$
\end{proof}

Of course there is an isomorphism between $\Diffssemicontact(M)$ and $\Diffscontact(M)$ (obtained by simply forgetting about the scaling $\Lambda$), and we may thus use this result to discuss the Riemannian exponential map directly on the contactomorphism group $\Diffscontact(M)$, if desired. The extension to $\Diffssemicontact(M)$ is only to make the technical details work out more easily.

We have used the conservation law in Proposition \ref{conservationlawprop} to prove local existence of solutions; it also implies that the only thing that can go wrong with global existence is that $\eta$ fails to be a diffeomorphism because the Jacobian determinant $\Jac{(\eta)}$ approaches zero or infinity in finite time. This is the same behavior one sees in a typical one-dimensional nonlinear hyperbolic equation. Intuitively we expect that as long as $\Jac{(\eta)}$ satisfies an estimate of the form $a\le \Jac{(\eta(t))} \le b$ for $0\le t\le T$, then the momentum $m(t)$ will be a globally bounded function. Since $m(t) = f(t) - \Laplacian f(t)$, a $C^0$ bound on $m$ roughly implies a $C^2$ bound on $f$,
% in essence, we conclude a $C^2$ bound on $f$
which leads to estimates on all Sobolev norms of the velocity field $u$ as in \cite{BKM}.
%, in much the same way as Kato~\cite{kato} proves global existence for two-dimensional hydrodynamics and the first author~\cite{ebinsymplectic} proves global existence for symplectohydrodynamics in any dimension.
Since $\Jac{(\eta)} = \exp{[(n+1)\Lambda]}$ for any $(\eta,\Lambda)\in \Diffsemicontact(M)$ by Lemma \ref{jacobianlemma}, we can write the global existence condition in terms of the function $\Lambda$, and this gives a ``Beale-Kato-Majda''-style criterion for global existence as in \cite{BKM}. As before we will work with an associated Riemannian metric just to simplify the notation, though the result does not depend on this assumption.

\begin{theorem}\label{mainthm2}
Let $M$ be a compact contact manifold of dimension $2n+1$ as in Theorem \ref{mainthm1} with associated Riemannian metric. Let $(u(t),\lambda(t)) = \Ssemiop f(t)$ be a solution of the Euler-Arnold equation \eqref{main} (defined a priori only for short time) with $f(t)\in H^{s+1}(M)$ for some $s>n+3/2$. Then the solution exists up to time $T$ if we have
\begin{equation}\label{BKMcontacto}
\int_0^T \lVert \Reeb(f)(t)\rVert_{L^{\infty}} \, dt = C<\infty \qquad \text{for all $t\in [0,T]$.}
\end{equation}
\end{theorem}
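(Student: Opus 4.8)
The plan is to implement a standard Beale-Kato-Majda-type continuation argument adapted to the geometry of $\Diffssemicontact(M)$. The solution $(\eta(t),\Lambda(t))$ is an integral curve of the smooth vector field $U$ from Theorem \ref{mainthm1}, so by the standard ODE theory on Hilbert manifolds it extends as long as it stays in a compact (or at least bounded, away-from-the-boundary) region of $\Diffssemicontact(M)$; concretely, the obstruction to global existence is that either $\eta(t)$ degenerates as a diffeomorphism (i.e. $\Jac(\eta(t))\to 0$ or $\infty$) or the $H^s$ norm of $(\eta(t),\Lambda(t))$ blows up. First I would show that the hypothesis \eqref{BKMcontacto} controls $\Lambda$: since $\dot\Lambda(t,p) = \Reeb(f)(t,\eta(t,p))$ along the flow, we have $\lVert \Lambda(t)\rVert_{L^\infty} \le \int_0^t \lVert \Reeb(f)(\tau)\rVert_{L^\infty}\,d\tau \le C$, and hence by Lemma \ref{jacobianlemma} the Jacobian satisfies $e^{-(n+1)C} \le \Jac(\eta(t)) \le e^{(n+1)C}$ uniformly on $[0,T]$. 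So $\eta(t)$ cannot degenerate, and it remains only to bound the $H^s$ norms.

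Next I would use the conservation law \eqref{conservation} from Proposition \ref{conservationlawprop}: $m(t,\eta(t,p)) = m_0(p)/\Jac(\eta(t,p))^{(n+2)/(n+1)}$. Combined with the two-sided Jacobian bound just obtained, this gives $\lVert m(t)\rVert_{L^\infty} \le e^{(n+2)C} \lVert m_0\rVert_{L^\infty}$ for all $t\in[0,T]$, i.e. the momentum stays uniformly bounded in $L^\infty$. The heart of the argument is then to upgrade this $L^\infty$ bound on $m$ (equivalently, on $f - \Laplacian f$) together with the $L^\infty$ bound on $\Reeb(f)$ to an a priori bound on $\lVert f(t)\rVert_{H^{s+1}}$, which prevents blow-up and allows the solution to be continued past $T$. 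For this I would differentiate \eqref{main} (or rather apply $\Laplacian^k$ or a pseudodifferential operator $(1-\Laplacian)^{(s-1)/2}$ to it) and do an energy estimate in $H^{s-1}$ for $m$, estimating the commutator terms $[(1-\Laplacian)^{(s-1)/2}, \Sop f \cdot]$ and the term $(n+2)m\Reeb(f)$ using the Kato--Ponce / Moser-type product and commutator estimates, exactly as in \cite{BKM}. The key observation that makes this work is that $u = \Sop f$ involves only one derivative of $f$, and $\lVert f\rVert_{W^{1,\infty}} \lesssim \lVert m\rVert_{L^\infty}$ (by elliptic regularity for $1-\Laplacian$ on the compact manifold $M$), while $\lVert \Reeb(f)\rVert_{L^\infty}$ is controlled by hypothesis; these together bound the $W^{1,\infty}$ norm of the coefficients in the transport operator, so Gronwall's inequality gives $\lVert m(t)\rVert_{H^{s-1}} \le \lVert m_0\rVert_{H^{s-1}} \exp\big(c\int_0^t (\lVert m(\tau)\rVert_{L^\infty} + \lVert \Reeb(f)(\tau)\rVert_{L^\infty})\,d\tau\big)$, which is finite on $[0,T]$.

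Finally, with $\lVert m(t)\rVert_{H^{s-1}}$ bounded on $[0,T]$, elliptic regularity gives $\lVert f(t)\rVert_{H^{s+1}}$ bounded, hence $\lVert u(t)\rVert_{H^s}$ bounded, and integrating the flow equation $\dot\eta = u\circ\eta$ together with $\dot\Lambda = \Reeb(f)\circ\eta$ shows $(\eta(t),\Lambda(t))$ stays in a bounded subset of $\Diffssemicontact(M)$ with $\eta$ bounded away from the boundary of the diffeomorphism group; the standard continuation theorem for flows of smooth vector fields on Hilbert manifolds then lets us extend the solution beyond $T$, contradicting maximality unless the maximal time exceeds $T$. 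I expect the main obstacle to be the energy estimate step: one must be careful that the transport operator $\Sop f(\cdot)$ is not quite skew-adjoint (because $\diver \Sop f = (n+1)\Reeb(f)\neq 0$), so the ``bad'' top-order term from integration by parts contributes a factor proportional to $\lVert \Reeb(f)\rVert_{L^\infty}$ rather than canceling, which is precisely why $\Reeb(f)$ and not merely $m$ appears in the criterion \eqref{BKMcontacto}; handling the commutators on a general compact manifold (rather than on $\mathbb{R}^d$ or the torus) requires the manifold version of the Kato--Ponce estimates, which is routine but must be invoked carefully.
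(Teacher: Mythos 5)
Your overall architecture is right and matches the paper's through the first stage: the hypothesis controls $\Lambda$, hence $\Jac(\eta)$ via Lemma \ref{jacobianlemma}, hence $\lVert m(t)\rVert_{L^\infty}$ via the conservation law \eqref{conservation}, and then one closes with an $H^{s-1}$ energy estimate for $m$ plus commutator bounds and Gronwall. But there is a genuine gap at the step you describe as the ``key observation.'' You claim that $\lVert f\rVert_{W^{1,\infty}}\lesssim\lVert m\rVert_{L^\infty}$ bounds ``the $W^{1,\infty}$ norm of the coefficients in the transport operator.'' The coefficients of the transport operator are the components of $u=\Sop f$, which already contain first derivatives of $f$; the Moser/Kato--Ponce commutator estimate you invoke (the analogue of \eqref{calculusinequality}) requires $\lVert\nabla u\rVert_{L^\infty}$, i.e.\ a bound on $\lVert f\rVert_{C^2}$. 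Elliptic regularity does \emph{not} give $C^2$ control of $f$ from $C^0$ control of $m=f-\Laplacian f$ --- this endpoint failure is precisely the difficulty that makes Beale--Kato--Majda-type theorems nontrivial, and your Gronwall inequality with exponent $c\int_0^t(\lVert m\rVert_{L^\infty}+\lVert \Reeb(f)\rVert_{L^\infty})\,d\tau$ cannot be derived as stated.

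The paper devotes the bulk of its proof to exactly this missing step, following Kato's 1967 argument rather than the logarithmic-inequality route: from the kernel estimates \eqref{pseudo-est} one shows $df$ is bounded and \emph{quasi-Lipschitz} (Lipschitz up to a factor $\log(R/\dist(p,q))$), hence so is $u$; a differential inequality for $\phi(t)=\dist(\sigma(t,p),\sigma(t,q))$ then shows the flow $\eta(t)$ and its inverse are uniformly $C^\alpha$; transporting $m_0$ by a H\"older flow makes $m(t)$ uniformly $C^\alpha$, so Schauder theory gives $f(t)\in C^{2+\alpha}$ and $u\in C^{1+\alpha}$; a separate Gronwall argument then bounds $\lVert m(t)\rVert_{C^1}$; only after all of this does the $H^{s-1}$ energy estimate close. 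If you want to avoid that bootstrap, the standard alternative is a logarithmic Sobolev inequality of the form $\lVert\nabla u\rVert_{L^\infty}\lesssim \lVert m\rVert_{L^\infty}\bigl(1+\log(1+\lVert m\rVert_{H^{s-1}})\bigr)$ followed by a log-Gronwall (double-exponential) bound, but you would need to state and justify that inequality on a general compact contact manifold; as written, your proposal contains neither argument.
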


\begin{proof}
%One direction is easy. If $\int_0^T \lVert \Lambda(t)\rVert_{L^{\infty}} \, dt = \infty$, then we must have a sequence of times $t_n\ne T$ such that $\lVert \Lambda(t_n)\rVert_{L^{\infty}} \to \infty$. For each $n$ choose $p_n\in M$ such that $\lvert \Lambda(t_n, p_n)\rvert \ge \tfrac{1}{2} \lVert \Lambda(t_n)\rVert_{L^{\infty}}$. Choosing a subsequence and reindexing, we may assume that $p_n\to p\in M$.
%Let $M_0 = \int_0^T \lVert \Lambda(t)\rVert_{L^{\infty}} \, dt$.
Since the solution $(u(t),\lambda(t))$ exists as long as $(\eta(t),\Lambda(t))$ does, and since $(\eta(t),\Lambda(t))$ solves a smooth ordinary differential equation on $\Diffssemicontact(M)$, it is sufficient to show that $\lVert u(t)\rVert_{H^s}$ and $\lVert \lambda(t)\rVert_{H^s}$ remain bounded; hence it is sufficient to show that $\lVert f(t)\rVert_{H^{s+1}}$ is bounded on $[0,T]$, where $f(t)$ is the stream function. We will use the conservation law \eqref{conservation} to achieve this.

First we note that since $\Jac{(\eta)} = e^{(n+1)\Lambda}$ by Lemma \ref{jacobianlemma} and $\Lambda$ satisfies $\frac{\partial}{\partial t} \Lambda(t,x) =
\Reeb(f)(t,\eta(t,x))$, our assumption \eqref{BKMcontacto} implies that
%$$   e^{-(n+1)\int_0^T \Reeb(f)(t,\eta(t,x)) \,dt} \le \Jac(\eta(t,x)) \le e^{(n+1) \int_0^T \Reeb(f)(t,\eta(t,x)) \, dt $$
%$$ e^{-(n+1) \int_0^T \lVert \Reeb(f)(t)\rVert_{L^{\infty}}} \le \Jac(\eta(t,x)) \le e^{(n+1) \int_0^T \lVert \Reeb(f)(t)\rVert_{L^{\infty}} \, dt}$$
%\begin{equation}\label{jacobianbound}
$$ e^{-(n+1) C} \le \Jac{\eta(t)} \le e^{(n+1)C}.$$
%\end{equation}
Using \eqref{conservation}, we obtain that
$\lvert m(t,\eta(t,p))\rvert \le \lvert m_0(p)\rvert e^{(n+2)C}$ for all $p\in M$, and in particular $\lVert m(t)\rVert_{C^0} \le e^{(n+2)C} \lVert m_0\rVert_{C^0}$.
Using this we will find a bound for $df(t)$, or equivalently for $\Ssemiop f(t)$.

Recall from Proposition \ref{associatedprop} that the contact Laplacian $\contactlap$ is an isomorphism from $H^{s+1}(M)$ to $H^{s-1}(M)$. From the theory of elliptic operators (\cite{taylor}, Chapter 7, Proposition 2.2 for $M = \mathbb{R}^n$ and Chapter 7, Section 12 for $M$ any compact manifold) we find that its inverse is a pseudodifferential operator whose Schwartz kernel $k: M \times M \rightarrow \mathbb{R} $ is smooth off the diagonal and obeys the estimate
\begin{equation} \label{pseudo-est} |D_p^{\beta} k(p,q)| \leq K \dist(p,q)^{-2n+1-|\beta|} \quad {\rm for} \quad 2n+|\beta| > 1, \end{equation}
where $D_p^{\beta}$ means a $\beta$-order derivative operator with respect to the first variables of $k$ and $\dist(p,q) $ is the distance from $p$ to $q$ defined by the Riemannian metric on $M$.\footnote{In the inequalities that follow $K$ will always denote some positive constant, but it may be different in different inequalities.}

From this and the fact that $m = \contactlap f = f-\Laplacian f$, we can estimate
$$
f(p) = \int_M k(p,q) m(q) \, d\mu(q)
$$
and
\begin{equation}
\label{df} df(p) = \int_M d_p k(p,q) m(q) d\mu(q)
\end{equation}
where $d_p$ is the differential with respect to the $p$-variables and $d\mu(q)$ indicates integration with respect to $q$ using the Riemannian volume element of $M.$\footnote{In the sequel we will sometimes write $dk$ for $d_p k.$}
We have $|k(p,q)| \leq K \dist(p,q) ^{-2n+1}$ and $|d_p k(p,q)| \leq K \dist(p,q) ^{-2n},$ so both of these integrals are  bounded by a constant times
$\| m(t)\|_{C^0}$ %= \| \contactlap f(t)\|_{C^0}$
which, as we have seen, is bounded in time. With this and formula \eqref{Scontactdarboux} we see that $u = \Sop f$ is bounded uniformly in time as well.
%\textcolor{red}{There is no need for a separate argument in case the manifold has dimension $2$!}

We proceed to seek a time-uniform Lipschitz bound for $df$, but in fact we will be able to find only a quasi-Lipschitz bound, as we shall now explain.  Since $M$ is compact, we can find a positive $\smally$ such that each point of $M$ has a normal coordinate neighbourhood ball of radius at least $\smally.$

Fix $p\in M$. Then for any $q \in M$ with $\dist(p,q) < \smally$, we have a unique minimal geodesic $\mingeodesic$ parameterized so that $\mingeodesic(0) = p$ and $\mingeodesic (1) = q$.
Fix such a $q$. We let  $ b = |\mingeodesic'(\tau)|,$ so that $b = \dist(p,q)$. Also we parallel translate $ df(p)$ along $\mingeodesic$ to get some $df'(q) \in T^* _q M.$ Then we shall estimate
$|df(q) - df'(q)|$ where $|\; \: |$ is the norm on $T^* _q M.$
To do this we use the formula \eqref{df} for $df$, and we parallel translate each $d_p k(p,r)$ for $r\in M$ along $\mingeodesic$ to get $dk'(q,r) \in T^* _q M.$  In this way we get $$ df'(q) = \int_M dk'(q,r) m(r) \,d\mu(r)$$ and we find
$$df'(q) - df(q) = \int_M (dk'(q,r) - dk(q,r)) m(r) \,d\mu(r)$$

Following \cite{kato}, Lemma 1.4, we split up this integral as follows:
Let $\Sigma = B_{2b} (p),$ the ball of radius $2b$ about $p$.
Then $\int_M = \int_{\Sigma} + \int_{M\backslash \Sigma}$. For the integral over $\Sigma$, we have the estimate
$$\int_{\Sigma} dk'(q,r) m(r) \,d\mu(r) \leq K \lVert m \rVert_{C^0} \int_{\Sigma} \dist(p,r)^{-2n} \,d\mu(r)  \leq 2bK \lVert m\rVert_{C^0}.$$
Also $\Sigma \subset B_{3b}(q)$ and
$$ \int_{\Sigma} dk(q,r) m(r)\,d\mu(r) \leq
K \lVert m \rVert_{C^0} \int_{\Sigma} \dist(q,r)^{-2n} \,d\mu(r)
\leq 3bK \lVert m \rVert_{C^0}. $$
Combining these we get
\begin{equation} \label{sigest} \int_{\Sigma} \big(dk'(q,r) - dk(q,r)\big) m(r) \,d\mu(r) \leq 5bK \lVert m \rVert_{C^0}.
\end{equation}

The estimate of the integral over $M\backslash\Sigma$
%$\int_{N-\Sigma} (dk'(y,z) - dk(y,z)) \Laplacian_{\theta} f(z) dz  $
is more subtle.  If we let $P(\tau)$ denote parallel translation along $\mingeodesic$ from $\mingeodesic(\tau)$ to $\mingeodesic(1)$,
%(\textcolor{red}{this was $\mingeodesic(b)$ before but I think you wanted it to be $1$ since the geodesic is not unit-speed})
so that
$P(\tau): T^*_{\mingeodesic(\tau)} M \rightarrow T^*_q M,$ we find that
%\begin{eqnarray}
\begin{align*}
dk'(q,r) - dk(q,r) &= P(0)\big(d k(p,r)\big) -P(1)\big(dk(q,r)\big) \\
&= -\int_0^1 P(\tau) \nabla_{\mingeodesic'(\tau)}dk(\mingeodesic(\tau),r)\,d\mu(r).
\end{align*}
%\end{eqnarray}
Since parallel translation is an isometry, we therefore have
%\begin{eqnarray}
\begin{align*}
 |dk'(q,r) -dk(q,r)| &\le \max_{0\leq \tau \leq 1} \{|\nabla_{\mingeodesic'(\tau)} dk(\mingeodesic(\tau),r)|\}\\
 &\le K \max_{0\le \tau\le 1} \lvert \mingeodesic'(\tau)\rvert \dist\big(\mingeodesic(\tau), r\big)^{-2n-1}.
% &\le Kb \dist(q,r)^{-2n-1}
% &\le Kb(\dist(p,r)-b)^{-2n-1},
\end{align*}
Now $\lvert \mingeodesic'(\tau)\rvert = b$ for all $\tau$, and by the triangle inequality we have for all $\tau\in [0,1]$ that
$$ \dist\big(\mingeodesic(\tau),r\big) \ge \dist(p, r) - \dist\big(p, \mingeodesic(\tau)\big) \ge \dist(p,r) - b.$$
We conclude that
$$ \lvert dk'(q,r) - dk(q,r)\rvert \le Kb \lvert \dist(p,r) - b\rvert^{-2n-1}$$
whenever $r\in M\backslash \Sigma$, for any $q$ such that $\dist(p,q)=b<\smally$.

Let $R= {\rm diam}(M)<\infty$ (since $M$ is compact).  Then
%\begin{eqnarray}
\begin{align*}
\int_{M\backslash\Sigma} |dk'(q,r)-dk(q,r)| \, d\mu(r) &\le  Kb\int_{M\backslash\Sigma} (\dist(p,r)-b)^{-2n-1}d\mu(r)\\
&\le Kb \int_{2b}^R \frac{\rho^{2n}}{(\rho-b)^{2n+1}} \, d\rho
\end{align*}
(possibly modifying $K$).
For $2b\le \rho\le R$ we know that $1\le \frac{\rho}{\rho-b}\le \frac{3}{2}$, so we can overestimate
$$ \int_{M\backslash\Sigma} |dk'(q,r)-dk(q,r)| \, d\mu(r) \le K'b \int_{2b}^R (\rho - b)^{-1} \, d\rho \le K'b \log{(R/b)}.$$
%&\le K'b \int_{2b}^R (\rho-b)^{-1}\,d\rho \\
%&= K'b(\log(R-b) - \log b) \\
%&\le K'b \log(R/b)
%\end{align*}

Combining this with \eqref{sigest} we find
$$
\int_M |dk'(q,r) -dk(q,r)| \,d\mu(r) \leq 5K\dist(p,q) + K'\dist(p,q) \log(R/\dist(p,q))
$$
for $\dist(p,q) < \smally.$  By increasing $K$ and $K'$ we simplify this inequality to
$$
\int_M |dk'(y,z)-dk(y,z) | \,d\mu(r) \leq K \dist(p,q)(1+ \log(R/\dist(p,q)).
$$
Thus we find that if $\dist(p,q)<\smally$ and $df'(q)$ is the parallel transport of $df(p)$ along the minimizing geodesic from $p$ to $q$, then
\begin{equation} \label{quasi}
|df'(q) - df(q)| \leq K\dist(p,q)(1+\log(R/\dist(p,q))
\end{equation}
where $K$ is independent of $t$ as before. With this inequality we say that $df$ is \emph{quasi-Lipschitz}.
Also since $df(p)$ is bounded independently of $p$ and $t$ we find that by further increasing $K$
we get \eqref{quasi} for all $p,q \in M;$ that is, we can drop the restriction $\dist(p,q) < \smally.$
As a consequence note that for any $\gamma<1$ we have
\begin{equation}
\lvert df'(q)-df(q)\rvert \le K \delta(p,q)^{\gamma}
\end{equation}
since the logarithm grows slower than any power. Hence $df$ is uniformly $C^{\gamma}$ for any $\gamma<1$.

%STOPPED HERE ON PLANE

Since $u = \Sop f$, our bound for $df$ gives the same quasi-Lipschitz bound for $u$, again uniformly in $t$.
Using this bound we can find a positive $\alpha$ for which the flow $\eta(t)$ of $u(t)$ is $C^{\alpha}.$
However we also need to show that $\eta(t)^{-1}$ is $C^{\alpha}$.  Fortunately we can do this by the
same method: given any fixed $t_0 \in [0,T)$, we define a time dependent vector field $v$ on
$M$ by $v(t) = -u(t_0-t)$. Then if $\sigma$ is the flow of $v$, it is easy to see that the maps
$t\mapsto \sigma\big(t, \eta(t_0,x)\big)$ and $t\mapsto\eta(t_0-t,x)$ satisfy the same differential equation,
and since $\sigma\big(0, \eta(t_0,x)\big) = \eta(t_0,x)$, they must be equal for all times $t\in [0,t_0]$.
Hence in particular $\sigma(t_0)=\eta(t_0)^{-1}.$ We proceed to show that $\sigma(t_0)$ is $C^{\alpha}$ for some $\alpha>0$;
the fact that each $\eta(t)$ is $C^{\alpha}$ is similar.

Fix $p$ and $q$ in $M$. Let $\mingeodesic\colon[0,t_0]\times [0,1]$ be the map such that for each $t\in [0,t_0]$, the curve
$\tau\mapsto \mingeodesic(t,\tau)$ is the minimal geodesic between $\sigma(t,p)$ and $\sigma(t,q)$
with $\mingeodesic(t,0)=\sigma(t,p)$ and $\mingeodesic(t,1)=\sigma(t,q)$.
Define
$$
\phi(t) = \dist\big(\sigma(t,p), \sigma(t,q)\big) = \int_0^1 \Big\lvert \frac{\partial \mingeodesic}{\partial \tau}(t,\tau)\Big\rvert \,d\tau.
$$
Then
\begin{align*}
\phi'(t) &= \frac{d}{dt}\left(\int_0^1 \Big\lvert \frac{\partial \mingeodesic}{\partial \tau}(t,\tau)\Big\rvert \, d\tau\right) \\
&= \int_0^1 \frac{1}{\lvert \frac{\partial \mingeodesic}{\partial \tau}(t,\tau)\rvert} \, \Big\langle
\frac{\partial\mingeodesic}{\partial \tau}(t,\tau), \frac{D}{\partial t} \frac{\partial \mingeodesic}{\partial \tau}(t,\tau)\Big\rangle \, d\tau
\end{align*}
But
$\tfrac{D}{\partial t} \tfrac{\partial\mingeodesic}{\partial \tau} = \tfrac{D}{\partial \tau} \tfrac{\partial\mingeodesic}{\partial t}$ by general properties of surface maps (e.g., \cite{lang} Chapter XIII, Lemma 5.3), and $\tfrac{D}{\partial \tau} \tfrac{\partial\mingeodesic}{\partial \tau} = 0$ since each $\tau\mapsto \mingeodesic(t,\tau)$ is a geodesic, and thus an integration by parts yields
$$
\phi'(t) = \frac{1}{\lvert \tfrac{\partial \mingeodesic}{\partial \tau}(t,\tau)\rvert} \Big\langle \frac{\partial \mingeodesic}{\partial \tau}(t,\tau), \frac{\partial \mingeodesic}{\partial t}(t,\tau)\Big\rangle\Big\vert_{\tau=0}^{\tau=1},
$$
using the fact that $\lvert \tfrac{\partial\mingeodesic}{\partial\tau}(t,\tau)\rvert$ is constant in $\tau$ since $\mingeodesic$ is a geodesic in $\tau$.

Now $\frac{\partial \mingeodesic}{\partial \tau}$ is parallel along $\mingeodesic$, and since the parallel transport
$P$ from $\mingeodesic(0)$ to $\mingeodesic(1)$ preserves inner products, we have
$$ \langle \tfrac{\partial \mingeodesic}{\partial \tau}(t,0), v\big(t, \sigma(t,p)\big)\rangle =
\langle \tfrac{\partial\mingeodesic}{\partial \tau}(t,1), Pv\big(t,\sigma(t,p)\big)\rangle. $$
Thus
\begin{align*}
\phi'(t) &= \frac{1}{\lvert \tfrac{\partial \mingeodesic}{\partial \tau}(t,\tau)\rvert} \big\langle \tfrac{\partial \mingeodesic}{\partial \tau}(t,1), v\big(t, \sigma(t,q)\big) - Pv\big(t, \sigma(t,p)\big)\big\rangle \\
&\le \lvert v\big(t, \sigma(t,q)\big)
-Pv\big(t,\sigma(t,p)\big)\rvert.
\end{align*}
Now since $v$ like $u$ is quasi-Lipschitz on $[0,t_0]$, we have a constant $K$ such that
\begin{equation}
\label{phidiffineq} \phi'(t) \leq K \phi(t)\left(1+ \log \left(\frac{R}{\phi(t)}\right)\right),
\end{equation}
where the constants $R$ and $K$ do not depend on $t$ or $t_0$.

We proceed to estimate $\phi(t)$.  Let $\psi(t) = \log(\phi(t)/R)$ so $\psi' = \phi'/\phi.$
Then from \eqref{phidiffineq} we get $\psi' \leq K(1-\psi)$.
Integrating this we find
$$\psi(t) \leq \psi(0)e^{-Kt} + 1-e^{-Kt}.$$
Exponentiating this inequality and noting that $\phi(0) \leq R$ we find that
$$\frac{\phi(t)}{R}\leq \left(\frac{\phi(0)}{R}\right)^{e^{-Kt}}e^{1-e^{-Kt}}
\leq \left(\frac{\phi(0)}{R}\right)^{e^{-KT}}e
$$
Thus $\phi(t) \leq R^{1-e^{-KT}} e \phi(0)^{e^{-KT}}$,
so letting $\alpha = e^{-KT}$ and $L = eR^{1-e^{-KT}}$,
we get $\phi(t) \leq L \phi(0)^{\alpha}$
and hence
\begin{equation} \label{rhoineq}
\dist(\sigma(t_0,p),\sigma(t_0,q)) \leq L \dist(p,q)^{\alpha}.
\end{equation}
The constants $L$ and $\alpha$ do not depend on the choice of $t_0$, so the estimate \eqref{rhoineq}
holds for all $t_0\in [0,T)$.
From \eqref{rhoineq} and the conservation law from Proposition \ref{conservationlawprop} in the form
$$ m(t,p) = m_0\big(\sigma(t,p)\big) \exp{\big[ -(n+2)\Lambda\big(t,\sigma(t,p)\big)\big)]},$$
%$$ \lvert m(t,p)\rvert \le e^{(n+2)C} \lvert m_0(p)\rvert,$$
%$$ \contactlap f(t) = \contactlap f_0 \circ \sigma(t)$$
we conclude that $m$ is H\"older continuous as follows: since $df$ is $C^{\gamma}$ for
any $\gamma$, so is $\Reeb(f)$, and thus $\Lambda(t,p) = \int_0^t \Reeb(f)\big(\tau, \eta(\tau,p)\big) \,d\tau$
is H\"older continuous as a composition of H\"older continuous functions. Now $\Lambda\circ \sigma$ and
$m_0\circ \sigma$ are also
H\"older continuous since $\sigma$, $\Lambda$, and $m_0$ are. Finally the product of H\"older continuous
functions is still H\"older continuous (for a possibly smaller exponent), so we find that $m(t)$
is bounded uniformly in $t$ in
$C^{\alpha}(M,\mathbb{R})$ for some $\alpha>0$, and thus by standard elliptic theory we get
$f(t)$ bounded in $C^{2+\alpha}(M,\mathbb{R})$,
from which it follows that $\Sop f$ is bounded in $C^{1+\alpha}(TM)$.

We now need a $C^1$ bound on $m(t) = \contactlap f(t)$, which we obtain as follows:
computing the gradient of both sides of \eqref{main},
we have $$\nabla m_t + \nabla_u\nabla m + [\nabla, \nabla_u] m + (n+2) m \nabla \Reeb(f) + (n+2) \Reeb(f) \nabla m = 0,$$
which implies that
\begin{align*}
\frac{d}{dt} \lvert \nabla m(t,\eta(t,x))\rvert
&\le \lvert [\nabla, \nabla_u] m\rvert (t,\eta(t,x)) \\
&\qquad\qquad + (n+2) \lvert m(t,\eta(t,x))\rvert \lvert \nabla \Reeb(f)(t,\eta(t,x))\\
&\qquad\qquad + (n+2) \lvert \Reeb(f)(t,\eta(t,x))\rvert \lvert (\nabla m)(t,\eta(t,x))\rvert \\
&\le K \lVert u(t)\rVert_{C^1} \lVert m(t)\rVert_{C^1} \\
&\qquad\qquad+ (n+2) \lVert m(t)\rVert_{C^0} \lVert f(t)\rVert_{C^2} \\
&\qquad\qquad + (n+2) \lVert f(t)\rVert_{C^1} \lVert m(t)\rVert_{C^1}.
\end{align*}
Gronwall's inequality then implies that
$$ \lVert m(t)\rVert_{C^1} \le \lVert m_0\rVert_{C^1}
\exp{\left( K \int_0^t \lVert f(\tau)\rVert_{C^2} \, d\tau\right)}$$
for some constant $K$, and since $f(t)$ is bounded in $C^{2+\alpha}$, we know it is also bounded in $C^2$; thus $m(t)$ is bounded in $C^1$.

Now we show that $f(t)$ is bounded in the $H^{s+1}$ topology, or equivalently that $m(t) = \contactlap f(t)$ is bounded in $H^{s-1}.$
Since $\partial_t m = -u(m) - (n+2) m\Reeb(f)$,
%$$ \partial_t \contactlap f = -S_{\contact}f (\Laplacian_{\contact} f)
%$$
for $s=1$ we have
\begin{align*}
\frac{d}{dt} \int_M m^2 \mu &= -\int_M u(m^2) \, d\mu - 2(n+2)\int_M m^2 \Reeb(f) \, d\mu \\
&= \int_M m^2 \big( \diver{u} - (2n+4) \Reeb(f)\big) \, d\mu \\
&= -(n+3) \int_M m^2 \Reeb(f) \, d\mu,
\end{align*}
using \eqref{divergenceSop},
so that
$$ \int_M m(t)^2 \, d\mu \le e^{(n+3)C} \int_M m_0^2 \, d\mu.$$
If $s>1$, then taking $s-1$ spatial derivatives\footnote{ ~Powers of $\nabla$ are defined in a standard way: for any function $f,$ $\nabla f$ is a section of $TM$, $\nabla^2 f$ is a section of $TM \otimes T^*M$ and for any $k$, $\nabla^k f$ is a section of $TM \otimes (T^*M)^{k-1}$.  Inner products are defined by the induced Riemannian metric.} we get
\begin{equation}\label{intyparts}
\begin{split}
\frac{d}{dt} \int_M |\nabla^{s-1} m |^2 \mu & =
-2 \int_M \langle \nabla_{u} \nabla^{s-1} m, \nabla^{s-1} m \rangle \, d\mu \\
&\qquad -2(n+2) \int_M \Reeb(f) \langle \nabla^{s-1}m, \nabla^{s-1}m\rangle \, d\mu
\\
 &\qquad -2 \int_M \langle[\nabla^{s-1}, \nabla_u]m,\nabla^{s-1} m\rangle \, d\mu \\
 &\qquad - 2(n+2) \int_M \langle \nabla^{s-1}m, [\nabla^{s-1}, \Reeb(f)]\nabla^{s-1}m\rangle \, d\mu,
\end{split}
\end{equation}
where $[ \;,\;]$ denotes the commutator.

As before, the first two terms in \eqref{intyparts} reduce to
$$ -(n+3) \int_M \Reeb(f) \lvert \nabla^{s-1}m\rvert^2 \, d\mu,$$
and for the last two terms we use the standard estimate
\begin{equation}\label{calculusinequality}
\| \nabla^k (hg) - h \nabla^k g\|_{H^0} \leq K \big(\|h\|_{H^k} \|g\|_{C^0} + \|\nabla h \|_{C^0} \|g\|_{H^{k-1}}\big),
\end{equation}
(with $k=s-1$) which can be found in \cite{taylor} Chapter 13, Proposition 3.7.
%Here we use $k=s-1$.
For the first commutator we choose $h = u$ and $ g = \nabla m$
and obtain
\begin{multline*}
\int_M \big\langle[\nabla^{s-1}, \nabla_u]m,
\nabla^{s-1} m\big\rangle \mu \\
\leq K \Big(
\lVert u\rVert_{H^{s-1}} \lVert m\rVert_{C^1} + \|u\|_{C^1} \| m \|_{H^{s-1}}\Big)\lVert m\rVert_{H^{s-1}}.
\end{multline*}
We already have bounds for $\lVert m(t)\rVert_{C^1}$ and for $\lVert u(t)\rVert_{C^1}$, and since $\lVert u(t)\rVert_{H^{s-1}} \lesssim \lVert m(t)\rVert_{H^{s-2}}$ we have a bound for the first commutator in terms of $\lVert m(t)\rVert^2_{H^{s-1}}$.

To bound the second commutator in \eqref{intyparts}, we use \eqref{calculusinequality} again with $h=\Reeb(f)$ and $g=m$ to obtain
\begin{multline*}
\int_M \langle \nabla^{s-1}m, [\nabla^{s-1}, \Reeb(f)]\nabla^{s-1}m\rangle \, d\mu \\
\le \tilde{K} \Big( \lVert f\rVert_{H^s} \lVert m\rVert_{C^0} + \lVert f\rVert_{C^2} \lVert m\rVert_{H^{s-2}}\Big)\lVert m\rVert_{H^{s-1}},
\end{multline*}
and we already have bounds for each of these. Combining and overestimating, we obtain
$$ \frac{d}{dt} \lVert \nabla^{s-1}m\rVert^2_{L^2} \le (n+3) \lVert \Reeb(f)\rVert_{C^0} \lVert \nabla^{s-1}m\rVert_{L^2}^2 + \overline{K} \lVert m\rVert_{C^1} \lVert m\rVert^2_{H^{s-1}},$$
which leads to a bound on $\lVert m(t)\rVert_{H^{s-1}}$ on $[0,T]$ by Gronwall's inequality.
This completes the proof.
\end{proof}

In the next section we will analyze a special case where one can obtain global existence essentially for free.

\section{Special cases and other aspects}

\subsection{Quantomorphisms}\label{quantosection}

In some situations we care more about the contact form than the contact structure. In this case the appropriate group to consider is the group of \emph{quantomorphisms} given by
$$ \Diffexcontact(M) = \{ \eta\in \Diff(M) \, \vert \, \eta^*\contact = \contact\}.$$
We may identify this group with the subgroup
$$ \{ (\eta, \Lambda)\in \Diffsemicontact(M) \, \vert \, \Lambda = 0\}.$$
Every quantomorphism preserves the volume form by Lemma \ref{jacobianlemma}. A quantomorphism also preserves the Reeb field:
infinitesimally if $\Reeb(f)=0$ and $u=\Sop f$,
then
$$[\Reeb, u] = [\Sop 1, \Sop f] = \Sop \{1,f\} = \Sop(\Reeb(f)) = 0$$
by \eqref{contactpoisson};
the noninfinitesimal proof works as in \cite{ratiuschmid}.
As a result we have
\begin{equation}\label{quantotangent}
T_{\id}\Diffexcontact(M) = \{ \Sop f\, \vert \, \Reeb(f)\equiv 0\}.
\end{equation}
The ``padded quantomorphism group,'' viewed as a subgroup of the padded contactomorphism group, consists of
$$ \widetilde{\Diffexcontact}(M) = \{ \Ssemiop f\, \vert \, \Reeb(f) \equiv 0 \} = \{ (\Sop f, 0\}\, \vert \, \Reeb(f) \equiv 0\}.$$
The following example shows that the quantomorphism group structure depends greatly on the properties of the Reeb field.

\begin{exmp}\label{torus3d}
On $M=\mathbb{T}^3 = (\mathbb{R}/2\pi \mathbb{Z})^3$ with coordinates $(x,y,z)$ and with contact form $\contact = \sin{z} \, dx + \cos{z} \, dy$, the Reeb field is $\Reeb=\sin{z} \, \partial_x + \cos{z} \, \partial_y$. Every quantomorphism must preserve the Reeb field, but the Reeb field has nonclosed orbits whenever $\tan{z}$ is irrational, and hence any function which is constant on the orbits must actually be a function only of $z$. It is then easy to see that the identity component of $\Diffexcontact(\mathbb{T}^3)$ consists of diffeomorphisms of the form
$$ \eta(x,y,z) = \big(x+p(z)\sin{z} + p'(z) \cos{z}, y+p(z)\cos{z}-p'(z)\sin{z}, z\big)$$
for some function $p\colon S^1\to \mathbb{R}$. This group is abelian, so any right-invariant metric will actually be bi-invariant, and all geodesics will be one-parameter subgroups.
\end{exmp}

The only way to get an interesting quantomorphism group is if the Reeb field happens to have all of its orbits closed and of the same length. In this case the contact manifold must be related to a symplectic manifold by a \emph{Boothby-Wang fibration}~\cite{boothbywang}. We say that the contact form is \emph{regular}, following Ratiu and Schmid~\cite{ratiuschmid}. If this happens, then there is a symplectic manifold $N$ given as the quotient space of $M$ by the orbits, with a map $\pi\colon M\to N$ and a symplectic form $\omega$ on $N$ such that $\pi^*\omega = d\contact$. The best-known example is the Hopf fibration of $S^3$ over $S^2$. When the contact form is regular, the tangent space to $\Diffexcontact(M)$ may be identified with the space of functions $f\colon M\to \mathbb{R}$ such that $\Reeb(f)=0$.

Omori~\cite{omori} proved (Theorem 8.4.2) that if $\contact$ is regular, then $\Diffsexcontact(M)$ is a smooth Hilbert submanifold of $\Diffssemicontact(M)$. Hence the Riemannian metric \eqref{contactometric} induces a Riemannian metric on $\Diffsexcontact(M)$, and the geodesic equation on the submanifold is obtained by the tangential projection of the full geodesic equation \eqref{main} on $\Diffssemicontact(M)$.
We now prove that this submanifold is totally geodesic by showing that the second fundamental form vanishes.

\begin{proposition}\label{totallygeodesic}
Suppose $M$ is a contact manifold with an associated Riemannian metric as in Definition \ref{associateddef}, and suppose that the Reeb field $\Reeb$ is a Killing field.
If $\contact$ is a regular contact form on $M$, then $\widetilde{\Diffexcontact}(M)$ is a totally geodesic submanifold of $\Diffsemicontact(M)$. Hence any solution of the Euler-Arnold equation \eqref{main} such that $\Reeb(f_0)=0$ will have $\Reeb(f(t))=0$ for all time.
\end{proposition}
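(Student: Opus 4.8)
The plan is to show that the second fundamental form of $\widetilde{\Diffexcontact}(M)$ inside $\Diffsemicontact(M)$ vanishes identically; the assertion that $\Reeb(f)=0$ is preserved then follows, because a geodesic of the ambient group that is initially tangent to a totally geodesic submanifold stays inside it. Both metrics are right-invariant and $\widetilde{\Diffexcontact}(M)$ is a \emph{subgroup}, so it suffices to check the vanishing at the identity; there $T_{\id}\widetilde{\Diffexcontact}(M) = \{\Ssemiop f : \Reeb(f)=0\}$ is a Lie subalgebra, and by the standard formula for the Levi-Civita connection of a right-invariant metric (see \cite{arnoldkhesin}),
\[
\nabla_{\Ssemiop f}\Ssemiop g \;=\; \tfrac12\big(\ad^*_{\Ssemiop f}\Ssemiop g + \ad^*_{\Ssemiop g}\Ssemiop f - [\Ssemiop f,\Ssemiop g]\big) \quad\text{at } \id .
\]
Since the bracket term is tangential, the normal component of $\nabla_{\Ssemiop f}\Ssemiop f$ equals the normal component of $\ad^*_{\Ssemiop f}\Ssemiop f$. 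Everything therefore reduces to one claim: \emph{if $\Reeb(f)=0$, then the stream function of $\ad^*_{\Ssemiop f}\Ssemiop f$ is again annihilated by $\Reeb$.}

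To prove the claim I would insert the formula from the proof of Proposition~\ref{derivationprop},
\[
\ad^*_{\Ssemiop f}\Ssemiop f \;=\; \Ssemiop\,\contactlapinv\big[\,\Sop f(m) + (n+2)\,m\,\Reeb(f)\,\big], \qquad m = \contactlap f .
\]
When $\Reeb(f)=0$ the last term vanishes, so it remains to show $\Reeb\big(\contactlapinv[\Sop f(m)]\big) = 0$.

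There are exactly two ingredients. First, since $\Reeb$ is a Killing field it commutes with the Laplace--Beltrami operator on functions, hence with $\contactlap = \id - \Laplacian$ and with $\contactlapinv$; in particular $\Reeb(m) = \Reeb(\contactlap f) = \contactlap(\Reeb f) = 0$, and $\Reeb\big(\contactlapinv[\Sop f(m)]\big) = \contactlapinv\big(\Reeb[\Sop f(m)]\big)$. Second, by the bracket identity \eqref{contactpoisson}, $[\Reeb, \Sop f] = [\Sop 1, \Sop f] = \Sop\{1,f\} = \Sop(\Reeb f) = 0$. Combining these, $\Reeb[\Sop f(m)] = [\Reeb,\Sop f](m) + \Sop f(\Reeb m) = 0$, so $\ad^*_{\Ssemiop f}\Ssemiop f$ lies in $T_{\id}\widetilde{\Diffexcontact}(M)$. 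Thus the second fundamental form vanishes on the diagonal, hence (by polarization) identically; and since $\contact$ is regular, $\widetilde{\Diffexcontact}(M)$ is a genuine smooth Hilbert submanifold of $\Diffssemicontact(M)$ (Omori's Theorem~8.4.2, recalled above), so it is totally geodesic. The remaining assertion follows: a geodesic with $\Reeb(f_0)=0$ stays in this submanifold, i.e.\ $\Reeb(f(t))\equiv 0$. One can also see this directly by applying $\Reeb$ to \eqref{main} and using the two ingredients above: the result is an evolution equation for $\Reeb(f)$ that is solved by $\Reeb(f)\equiv 0$, so uniqueness for the smooth ODE of Theorem~\ref{mainthm1} forces this to persist.

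The computation itself is short, and there is only one place where a hypothesis does real work: the Killing condition on $\Reeb$ is precisely what lets $\Reeb$ commute past the elliptic operator $\contactlap$. Without it, the commutator $[\Laplacian, \Reeb]$ would feed back a term obstructing invariance of $\{\Reeb f = 0\}$ under $\ad^*$, and the quantomorphism group would generally fail to be totally geodesic. So the only real care needed is to confirm that these formal manipulations are valid in the $H^s$ category --- they are, since $\Sop$, $\Reeb$, and $\contactlapinv$ act boundedly between the appropriate Sobolev spaces and $\Diffsexcontact(M)$ is smoothly embedded in $\Diffssemicontact(M)$ by Omori --- and to note that the regularity assumption enters only through this submanifold structure.
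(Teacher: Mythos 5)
Your proof is correct and follows essentially the same route as the paper: both arguments reduce total geodesy to the single identity $\Reeb\big(\Sop f(m)\big)=0$, proved via $[\Reeb,\Sop f]=\Sop(\Reeb f)=0$ and the Killing hypothesis forcing $\Reeb$ to commute with $\contactlap$. The only cosmetic difference is that you verify tangency of $\ad^*_{\tilde u}\tilde u$ directly from the explicit formula of Proposition \ref{derivationprop}, whereas the paper pairs it against an arbitrary normal vector $\Ssemiop g$ and uses the resulting $L^2$-orthogonality characterization; the two computations are equivalent.
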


\begin{proof}
It is an elementary result in Riemannian geometry that a submanifold is totally geodesic (i.e., geodesics which start in the submanifold remain there) if and only if the second fundamental form vanishes identically. To show that it vanishes, it is sufficient to show that
$\langle \nabla_{\tilde{u}}\tilde{u}, \tilde{v}\rangle = 0$ whenever $\tilde{u}$ is tangent to the submanifold and $\tilde{v}$ is orthogonal to it. For a right-invariant Riemannian metric on a Lie group, we have
$\nabla_{\tilde{u}}\tilde{u} = \ad_{\tilde{u}}^*{\tilde{u}}$, so it is sufficient to show that $\langle \tilde{u}, \ad_{\tilde{u}}\tilde{v}\rangle = 0$ whenever $\tilde{u}\in T_{\id}\widetilde{\Diffexcontact}(M)$ and $\tilde{v}\in T_{\id}\Diffsemicontact(M)$ is orthogonal to $T_{\id}\widetilde{\Diffexcontact}(M)$.

To be precise we write $v = \Ssemiop g$ and $u=\Ssemiop f$ where $\Reeb(f)=0$. We want $v$ orthogonal to $\Ssemiop h$ whenever $\Reeb(h)=0$, which gives a condition on $g$ as follows: we want
$$ \llangle \Ssemiop g, \Ssemiop h\rrangle = \int_M h \contactlap g \, d\mu = 0$$
whenever $\Reeb(h)=0$. Since $\Reeb$ is assumed to be Killing, it commutes with $\Laplacian$ and hence with $\contactlap$; hence we also have
$\int_M gq\, d\mu =0$ whenever $\Reeb(q)=0$.

 %and since $\Reeb$ is divergence-free we must have $\contactlap g = \Reeb(j)$ for some function $j$.
%Thus $g = \contactlapinv\Reeb(j)$. We note that since the metric is associated, we have $\contactlap = 1 - \Laplacian$, and since $\Reeb$ is a Killing field, it commutes with both $\Laplacian$ and $\contactlap$.

Now if $\Reeb(f)=0$,
%and $g = \contactlapinv\Reeb(j)$ for some function $j$,
then we compute that $\tilde{u}=\Ssemiop f$ and $\tilde{v}=\Ssemiop g$ satisfy
$$
\llangle \tilde{u}, \ad_{\tilde{u}}\tilde{v}\rrangle = \int_M \langle \Ssemiop f, \Ssemiop \{f, g\} \rangle \, d\mu
= \int_M \{f,g\} m \, d\mu,$$
where $m=\contactlap f$.
Since $\Reeb(f)=0$ we have from \eqref{contactpoisson} that $\{f,g\} = u(g)$ where $u=\Sop f$, so that
$$ \llangle \tilde{u}, \ad_{\tilde{u}}\tilde{v}\rrangle = \int_M m u(g) \, d\mu = -\int_M g \diver{(m u)} \, d\mu
= -\int_M g u(m) \, d\mu,$$
since $\diver{u}=(n+1)\Reeb(f)=0$ by Proposition \ref{associatedprop}.

Now $g$ is orthogonal to any function which is Reeb-invariant, so we will have $\llangle \tilde{u}, \ad_{\tilde{u}}\tilde{v}\rrangle = 0$
for $\tilde{v} = \Ssemiop g$ as long as we know $\Reeb\big(u(m)\big)=0$ whenever $m=\contactlap f$ and $u=\Sop f$ for an $f$ with $\Reeb(f)=0$.
Since $\Reeb = \Sop 1$, we have
\begin{align*}
\Reeb\big(u(m)\big) &= \Sop 1\big(\Sop f(m)\big) \\
&= [\Sop 1, \Sop f](m) + \Sop f\big( \Sop 1(m)\big) \\
&= \Sop\{1,f\}(m) + \Sop f\big( \Reeb(m)\big).
\end{align*}
Now by formula \eqref{contactpoisson} we have $\{1,f\} = \Sop 1(f) - f \Reeb(1) = \Reeb(f) = 0$,
and since $\Reeb$ commutes with $\contactlap$ we have $\Reeb(m) = \Reeb(\contactlap(f)) = \contactlap(\Reeb(f)) = 0$.
We conclude that $\llangle \tilde{u}, \ad_{\tilde{u}}\tilde{v}\rrangle = 0$, and thus the second
fundamental form of $\widetilde{\Diffexcontact}(M)$ is zero.
\end{proof}

Proposition \ref{totallygeodesic} has the easy corollary that if the contact form is regular, any solution of the Euler-Arnold equation \eqref{main} for which $\Reeb(f_0)=0$ will automatically have global solutions in time, using Theorem \ref{mainthm2}.

\begin{corollary}\label{globalquanto}
Suppose $M$ is a compact contact manifold with associated Riemannian metric satisfying Definition \ref{associateddef}, and such that the contact form $\contact$ is regular with the Reeb field $\Reeb$ a Killing field of the metric. Then any solution of \eqref{main} such that $\Reeb(f_0)\equiv 0$ will have $\Reeb(f(t))=0$ whenever it is defined, and hence by Theorem \ref{mainthm2} the solution will exist for all time.
\end{corollary}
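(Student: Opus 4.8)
The plan is to deduce this directly from Proposition~\ref{totallygeodesic} together with the Beale--Kato--Majda criterion of Theorem~\ref{mainthm2}; no new estimates are required.

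First I would set up the solution. Given $f_0\in H^{s+1}(M)$ with $\Reeb(f_0)\equiv 0$, Theorem~\ref{mainthm1} produces a short-time $H^s$ geodesic $(\eta(t),\Lambda(t))$ through $(\id,0)$ with initial velocity $\Ssemiop f_0$, and right-translation to the identity yields $u(t)=\Sop f(t)$ solving \eqref{main} with $f(t)\in H^{s+1}(M)$ for as long as the geodesic exists. Because the contact form $\contact$ is assumed regular and the Reeb field $\Reeb$ is assumed Killing, the hypotheses of Proposition~\ref{totallygeodesic} hold, so $\widetilde{\Diffexcontact}(M)=\{\Ssemiop h \mid \Reeb(h)\equiv 0\}$ is a totally geodesic submanifold of $\Diffsemicontact(M)$. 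Since $\Reeb(f_0)\equiv 0$, the initial velocity $\Ssemiop f_0$ is tangent to $\widetilde{\Diffexcontact}(M)$, hence the geodesic stays in this submanifold, which is exactly the assertion that $\Reeb(f(t))\equiv 0$ for every $t$ in the maximal interval of existence.

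Second, I would feed this into Theorem~\ref{mainthm2}. Since $\Reeb(f(t))\equiv 0$ on $M$ for each such $t$, we have $\lVert \Reeb(f)(t)\rVert_{L^\infty}=0$, so $\int_0^T \lVert\Reeb(f)(t)\rVert_{L^\infty}\,dt = 0<\infty$ for every $T>0$. Theorem~\ref{mainthm2} then guarantees that the solution extends to $[0,T]$; since $T$ is arbitrary the solution exists for all positive time, and the same reasoning applied to the time-reversed equation (as in the proof of Theorem~\ref{mainthm2}) gives existence for all negative time. This proves the corollary.

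The only point requiring a little care --- and it is minor --- is to confirm that Proposition~\ref{totallygeodesic}, which is phrased geometrically via the vanishing of the second fundamental form, really applies to the $H^s$ geodesic produced by Theorem~\ref{mainthm1}: one uses that by Omori's theorem $\Diffsexcontact(M)$ is a smooth Hilbert submanifold of $\Diffssemicontact(M)$ when $\contact$ is regular, that the restriction of the right-invariant metric \eqref{contactometric} to it has geodesic equation given by the tangential projection of \eqref{main}, and that a geodesic of the ambient manifold whose initial velocity is tangent to a totally geodesic submanifold remains in that submanifold. One should also note that $f(t)$ stays in $H^{s+1}(M)$ throughout, as required by Theorem~\ref{mainthm2}, because $(\eta(t),\Lambda(t))$ is an integral curve of a smooth vector field on the Hilbert manifold $\Diffssemicontact(M)$ and hence remains of class $H^s$ as long as it is defined.
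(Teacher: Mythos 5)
Your proposal is correct and follows exactly the route the paper intends: Proposition \ref{totallygeodesic} gives the invariance $\Reeb(f(t))\equiv 0$ (indeed this is already asserted in its statement), and then the Beale--Kato--Majda integral in Theorem \ref{mainthm2} vanishes identically, yielding global existence. The extra care you take about the submanifold structure via Omori's theorem and the persistence of $H^{s+1}$ regularity is consistent with what the paper establishes earlier and does not change the argument.
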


If $M$ is three-dimensional (so that the Boothby-Wang quotient is two-dimensional, and its volume form is the symplectic form), the Euler-Arnold equation on the quantomorphism group takes the form
\begin{equation}\label{fplane}
m_t + \{f,m\} = 0, \qquad m = f-\Laplacian f,
\end{equation}
where $\{\cdot, \cdot\}$ is the standard Poisson bracket. We may rescale the metric on $M$ so that the Reeb field has a different constant length $\alpha$, and in this case the momentum takes the form $m = \alpha^2 f - \Laplacian f$. Thus the Euler-Arnold equation on the quantomorphism group of $M$ is the quasigeostrophic equation in $f$-plane approximation on $N$, as in Holm-Zeitlin~\cite{holmzeitlin} and Zeitlin-Pasmanter~\cite{zeitlinpasmanter}; here $\alpha^2$ is the Froude number.

An alternative approach to the quantomorphism group is to view it as a central extension of the group $\Diffham(N)$ of Hamiltonian diffeomorphisms of the symplectic manifold $N$; this approach is used in Ratiu-Schmid~\cite{ratiuschmid} and is also taken in the references \cite{HT, GV, GT}.
%More recently several authors have observed that the quantomorphism group is the proper configuration
%space for the geodesic Vlasov equations (our geodesic equation is essentially a special case of this).
%Holm-Tronci~\cite{HT} proposed a general quadratic Hamiltonian
%on the quantomorphism group and studied its moments. Gay-Balmaz and Vizman~\cite{GV} computed the momentum maps for this equation, and Gay-Balmaz and Tronci~\cite{GT} found an interesting totally geodesic subgroup.
Smolentsev~\cite{smolentsevquanto} computed the curvature tensor of the quantomorphism group under the same assumptions.

A more sophisticated version of the quasigeostrophic equation is the $\beta$-plane approximation, for which the evolution equation for $f(t,x,y)$ takes the form
\begin{equation}\label{betaplanequasi}
(-\alpha^2 f + \Laplacian f)_t + \{f,\Laplacian f\} + \beta \partial_xf = 0,
\end{equation}
where $\alpha$ and $\beta$ are constants.
Vizman~\cite{vizmanquasigeostrophic} derived this equation as the Euler-Arnold equation of a central extension of the group of Hamiltonian diffeomorphisms, in the case that $\alpha=0$. However the same central extension applied to the group of quantomorphisms yields \eqref{betaplanequasi}. We can obtain global existence for these equations in exactly the same way as in Theorem \ref{mainthm2},
since there is a potential vorticity which is transported and there is no stretching.

Explicitly, given a Reeb-invariant function $\psi$ on $M$, and two Reeb-invariant
functions $p$ and $q$ on $M$ such that the contact bracket satisfies\footnote{All such functions descend to
the Boothby-Wang quotient $N$, so that all we want is that $\{\overline{p},\overline{q}\} = \overline{\psi}$ for the standard
Poisson bracket on $N$, where $\overline{p}$, $\overline{q}$, and $\overline{\psi}$ are the quotient functions on $N$.} $\{p,q\} = \psi$,
let us define a cocycle on $\Diffexcontact(M)$ by the formula
\begin{equation}\label{groupcocycle}
B(\eta, \xi) = \int_M p (q\circ \eta + q\circ\xi - q\circ\eta\circ\xi - q) \, d\mu.
\end{equation}
The corresponding cocycle on the Lie algebra is
\begin{equation}\label{algebracocycle}
b(u,v) = -\int_M p [u,v](q) \, d\mu,
\end{equation}
and $p$ and $q$ are related by $\{p,q\} = \psi$.

%EXPAND
\begin{proposition}
If $M$ is a contact manifold with an associated metric and a regular contact form, and $N$ is its Boothby-Wang quotient
with the Riemannian metric prescribed so that the projection $\pi\colon M\to N$ is a Riemannian submersion, then
on the Lie algebra $T_{\id}G$ consisting of $T_{\id}\Diffexcontact(M)$ with central extension defined by \eqref{algebracocycle},
the Euler-Arnold equation $\dot{u} + \ad_u^*u=0$ reduces to
\begin{equation}\label{quasigeo}
\vorticity_t + \{\overline{f},\vorticity\} = 0,\qquad \vorticity = \Laplacian \overline{f}-\alpha^2 \overline{f} - \beta \psi,
\end{equation}
in terms of $\overline{f}\colon \mathbb{R}\times N\to \mathbb{R}$ and the Poisson bracket $\{\cdot, \cdot\}$ on $N$.
On $N=\mathbb{R}^2$ with $\psi(x,y) = y$, we obtain the standard $\beta$-plane approximation \eqref{betaplanequasi}.
\end{proposition}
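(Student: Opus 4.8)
The plan is to realize \eqref{quasigeo} as the Euler--Arnold equation of the product metric on the central extension $G$, by handling the extension abstractly first and then identifying the cocycle term concretely on $N$. \emph{Setup.} As in the paragraph preceding the statement, rescale the associated metric so that $\Reeb$ has constant length $\alpha$; then \eqref{contactometric} restricted to $T_{\id}\Diffexcontact(M)=\{\Sop f\colon\Reeb(f)=0\}$ has inertia operator $f\mapsto\alpha^2 f-\Laplacian f$. Since $\contact$ is regular, every Reeb-invariant function is $\pi^*\overline{f}$ for a unique $\overline{f}$ on $N$, the contact Poisson bracket \eqref{contactpoisson} on such functions descends to the symplectic Poisson bracket $\{\cdot,\cdot\}$ of $N$, and because $\pi\colon M\to N$ is a Riemannian submersion with fibres of a fixed length $c>0$ (and minimal, since $\Reeb$ is Killing) the metric descends, up to the factor $c$, to the inertia operator $\overline{f}\mapsto\alpha^2\overline{f}-\Laplacian\overline{f}=:\overline{m}$ on $N$. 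Finally $T_{\id}G=T_{\id}\Diffexcontact(M)\oplus\mathbb{R}$ with bracket $[(u,a),(v,b)]=([u,v],b(u,v))$, $b$ the cocycle \eqref{algebracocycle}, and we equip it with the product metric $\llangle(u,a),(v,b)\rrangle_G=\llangle u,v\rrangle+ab$.

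\emph{Euler--Arnold on the extension.} A short pairing computation with the product metric gives $\ad_{(u,a)}^*(v,c)=\big(\ad_u^*v+c\,\widehat{b}_u,\ 0\big)$, where $\widehat{b}_u\in T_{\id}\Diffexcontact(M)$ is defined by $\llangle\widehat{b}_u,w\rrangle=b(u,w)$ for all $w$. Hence in $\frac{d}{dt}(u,a)+\ad_{(u,a)}^*(u,a)=0$ the $\mathbb{R}$-component of the velocity is conserved; call its value $\beta$. What remains is $\dot u+\ad_u^*u+\beta\,\widehat{b}_u=0$, and applying the inertia operator and using that, without the extension, the Euler--Arnold equation on the quantomorphism group is \eqref{fplane} (now with $m=\alpha^2 f-\Laplacian f$ because of the rescaling), this becomes
\[
\overline{m}_t+\{\overline{f},\overline{m}\}+\beta\,(\alpha^2-\Laplacian)\widehat{b}_u=0.
\]

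\emph{Identifying the cocycle term.} This is the crux. Using \eqref{contactpoisson}, the Reeb-invariance of $f$, $g$ and $\{f,g\}$, and the descent to $N$, rewrite $b(u,v)=-\int_M p\,[u,v](q)\,d\mu$ as a constant multiple of $\int_N\overline{p}\,\{\{\overline{f},\overline{g}\},\overline{q}\}\,dV_N$, where $dV_N$ is the symplectic volume on $N$ (here the well-definedness on $N$ is exactly the content of the footnote to \eqref{groupcocycle}, and the fibre length $c$ enters the constant). Since the Hamiltonian vector field of any function is $dV_N$-divergence-free, one has the cyclic identity $\int_N r\,\{s,t\}\,dV_N=\int_N t\,\{r,s\}\,dV_N$; applying it twice together with $\{\overline{p},\overline{q}\}=\overline{\psi}$ collapses the iterated bracket, giving $b(u,v)=c\int_N\overline{g}\,\{\overline{f},\overline{\psi}\}\,dV_N$. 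Thus $(\alpha^2-\Laplacian)\widehat{b}_u=\{\overline{f},\overline{\psi}\}$, so the displayed equation reads $\overline{m}_t+\{\overline{f},\ \overline{m}+\beta\overline{\psi}\}=0$. Setting $\vorticity:=-\overline{m}-\beta\psi=\Laplacian\overline{f}-\alpha^2\overline{f}-\beta\psi$ and using $\partial_t\psi=0$ yields exactly \eqref{quasigeo}. On $N=\mathbb{R}^2$ with $\psi(x,y)=y$ one has $\{\overline{f},\psi\}=\pm\partial_x\overline{f}$, so \eqref{quasigeo} reduces to $(-\alpha^2\overline{f}+\Laplacian\overline{f})_t+\{\overline{f},\Laplacian\overline{f}\}+\beta\,\partial_x\overline{f}=0$, which is \eqref{betaplanequasi}, the overall sign of $\beta$ being a matter of orientation convention.

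The main obstacle is the third step: one must check that $p$, $q$ and $[u,v](q)$ all descend to $N$, carry the fibre-length constant correctly through the Riemannian submersion, and --- most delicately --- keep the signs straight through the repeated use of antisymmetry and of the cyclic integration-by-parts identity, so that the cocycle contributes $+\beta\overline{\psi}$ inside the bracket and produces precisely the vorticity in \eqref{quasigeo}. The single ingredient beyond bookkeeping, the cyclic identity for the Poisson bracket against $dV_N$, is just the statement that symplectic vector fields preserve the symplectic volume. (For $\alpha>0$ the relation $\vorticity=(\Laplacian-\alpha^2)\overline{f}-\beta\psi$ inverts immediately for $\overline{f}$ by elliptic theory; for $\alpha=0$ one restricts to zero-mean functions, as in Vizman's case.)
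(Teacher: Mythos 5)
Your proof is correct and rests on the same essential computation as the paper's, but it is organized differently in a way worth recording. The paper works entirely in weak form: it pairs the Euler--Arnold equation against an arbitrary test element $(\Sop g, d)$, pushes the iterated bracket $\int_M p\,\Sop\{f,g\}(q)\,d\mu$ down to $N$, and uses the cyclic integration-by-parts identity for the Poisson bracket together with $\{\overline{p},\overline{q}\}=\overline{\psi}$ to turn it into $L\int_N \overline{g}\,\{\overline{f},\overline{\psi}\}\,d\nu$ --- exactly your ``identifying the cocycle term'' step, with the same fibre-length constant $L$ coming from the Riemannian submersion. What you do differently is to compute $\ad^*$ on the central extension abstractly first, which isolates the conserved central component of the velocity and identifies it as the parameter $\beta$; the paper only observes $c_t=0$ at the very end and leaves the provenance of $\beta$ in \eqref{quasigeo} implicit (its final display carries the cocycle contribution with coefficient $1$ rather than $c$, so strictly the constant is absorbed into the normalization of the cocycle). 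Your route makes that bookkeeping transparent, explicitly names $\widehat{b}_u$ and applies the inertia operator rather than staying in weak form, and connects the result directly to the unextended equation \eqref{fplane}; this is the more standard presentation for Euler--Arnold equations on one-dimensional central extensions. The sign ambiguities you flag are real but harmless: they amount to the orientation of $N$ and the sign of $\beta$ in \eqref{betaplanequasi}, and the paper's own computation is not fully consistent about them either.
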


\begin{proof}
Write $u\in T_{\id}G$ as $u = (\Sop f, c)$ where $f = \overline{f}\circ\pi$ for some $\overline{f}\colon N\to \mathbb{R}$
and $c\in \mathbb{R}$. It is sufficient to compute the inner product of the Euler-Arnold equation with an arbitrary $v = (\Sop g, d)$,
which takes the form $\llangle \dot{u}, v\rrangle + \llangle u, \ad_uv\rrangle = 0$. Here we have
$$
\ad_uv = \big(\ad_{\Sop f}\Sop g, b(\Sop f, \Sop g)\big) = \big(-\Sop \{f,g\}, -\int_M p \Sop\{f,g\}(q)\big),$$
in terms of the contact bracket \eqref{contactpoisson} on $M$. However we note that $\{f,g\} = \{\overline{f}, \overline{g}\}\circ\pi$
for $\Reeb$-invariant functions in terms of the quotient Poisson bracket on $N$. In addition since $p$, $q$, $f$, and $g$ are all
$\Reeb$-invariant, we have
\begin{multline*}
\int_M p \Sop\{f,g\}(q) \, d\mu = \int_M p\big\{\{f,g\}, q\big\} \, d\mu = L\int_N \overline{p} \big\{\{\overline{f},\overline{g}\},\overline{q}\big\} \, d\nu \\
= -L\int_N \{\overline{p},\overline{q}\} \{\overline{f},\overline{g}\} \, d\nu = -L\int_N \overline{\psi} \{\overline{f},\overline{g}\} \, d\nu
= L\int_N \overline{g} \{ \overline{f}, \overline{\psi}\} \, d\nu,
\end{multline*}
where $\nu$ is the volume form on $N$ and $L$ is the length of the Reeb field orbit.

From here we easily compute
\begin{align*}
0&= \llangle u_t, v\rrangle + \llangle u, \ad_uv\rrangle \\
&= \int_M g \contactlap f_t \, d\mu + c_t d - \int_M \contactlap f \{f,g\} \, d\mu
+ L\int_N \overline{g} \{ \overline{f}, \overline{\psi}\} \, d\nu \\
&= L \int_N \overline{g}\big( \alpha^2 \overline{f}_t- \Laplacian \overline{f} + \{\overline{f}, \alpha^2 f-\Laplacian f\} + \{\overline{f}, \overline{\psi}\}\big) \, d\nu + c_t d.
\end{align*}
This is zero for every $g$ and $d$ if and only if $c_t=0$ and equation \eqref{quasigeo} holds.
\end{proof}

In general, conservation of the potential vorticity $\vorticity$ in equation \eqref{quasigeo}
implies global existence just as in Corollary \ref{globalquanto}.

\subsection{Other aspects of the contactomorphism equation}

In this section we will remark on some interesting features of the contactomorphism equation: in particular some special infinite-energy one-parameter solutions, peakon solutions supported on submanifolds of codimension one (analogous to the standard peakons in the Camassa-Holm equation), and a few conservation laws which are analogous to the first few conserved quantities in the infinite hierarchy in the Camassa-Holm equation.

\subsubsection{Infinite-energy solutions}

Although we have proved Theorems \ref{mainthm1} and \ref{mainthm2} under the assumption that the contact manifold $M$ is compact, the equation \eqref{main} makes sense even if $M$ is not compact, as long as the stream function $f$ has compact support or decays sufficiently quickly. The situation we discuss here on $\mathbb{R}^3$ does not satisfy these properties, but gives a one-dimensional equation that can be studied in some detail, and helps illustrate the similarities between equation \eqref{main} and the Camassa-Holm equation.

We work with the standard Darboux contact form $\contact = dz - y\,dx$ on $\mathbb{R}^3$, with Reeb field $\Reeb = \frac{\partial}{\partial z}$. A natural Riemannian metric in this case is given by
$$ ds^2 = (dz-y\,dx)^2 + dx^2 + dy^2,$$
since in this case the metric is associated, as discussed in Example \ref{contactexamples}.
Consider a stream function $f$ of the form $f(t,x,y,z) = zg(t,y)$ for some function $g$; then $m(t,x,y,z) = z [g(t,y)-g_{yy}(t,y)]$, and $u=-zg_y\, \partial_x + y g_y\,\partial_y + z (g-yg_y) \, \partial_z$. We can check that the Lie subalgebra consisting of such vector fields generates a totally geodesic submanifold, or simply verify that stream functions of this form give solutions of \eqref{main}; the equation that $g(t,y)$ must satisfy ends up being
\begin{equation}\label{onedimensionalversion}
g_t - g_{tyy} + 4g^2 - 4gg_{yy} = ygg_{yyy} - yg_y g_{yy}.
\end{equation}
The problem is that no such stream function can have finite $H^1$ norm on $\mathbb{R}^3$, and hence results that one can prove about \eqref{onedimensionalversion} do not necessarily apply to \eqref{main}, in much the same way that infinite-energy solutions of the equations of two-dimensional hydrodynamics may blow up in finite time~\cite{childressetal} although finite-energy solutions cannot. However the simpler one-dimensional case can give clues to the behavior of the higher-dimensional situation.

Sarria and the second author proved the following theorem~\cite{prestonsarria}. It gives a clue as to the role of the sign of the momentum in blowup, although the results are not directly applicable to our case.

\begin{theorem}
Define $\phi_0(y) = g_0(y)-g_0''(y)$, and assume $g_0$ is $C^2$ and satisfies the decay condition $\phi_0(y)=O(1/y^2)$ as $\lvert y\rvert \to \infty$. Then there is a $T>0$ such that there is a unique solution of \eqref{onedimensionalversion} with $g(0,y) = g_0(y)$, and $y\mapsto g(t,y)$ is $C^2$ for each $t$. If $\phi_0$ is nonnegative, then so is $g_0$, and solutions exist globally in time. On the other hand, if $g_0$ is even and negative, then solutions blow up at some time $T$ and $g(t,y)\to -\infty$ as $t\nearrow T$ for every $y\in \mathbb{R}$.
\end{theorem}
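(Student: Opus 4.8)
\emph{Proof proposal.} The plan is to recast \eqref{onedimensionalversion} as a nonlocal transport equation for the momentum $\phi=g-g_{yy}$ and then to run the Lagrangian/conservation-law analysis, exactly as McKean~\cite{mckean} does for the Camassa--Holm equation, while carrying along the extra multiplier coming from the contact structure. Substituting $\phi=g-g_{yy}$ (and recognizing \eqref{onedimensionalversion} as the $z$-independent shadow of \eqref{main} with $n=1$) turns the equation into $\phi_t+yg\,\phi_y+(4g-yg_y)\phi=0$ with $g=(1-\partial_y^2)^{-1}\phi$, where the inverse operator has the explicit \emph{positive} kernel $\tfrac12 e^{-|y-z|}$. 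Local existence and uniqueness of a solution with $y\mapsto g(t,y)$ of class $C^2$ then follows from a Picard iteration in a Banach space of functions decaying like $1/y^2$: the decay of $\phi_0$ propagates because the transport speed $w(t,y)=yg(t,y)$ is at most linear in $y$ and the multiplier $4g-yg_y$ is bounded once $\phi$ is controlled in the weighted norm, while the $C^2$ regularity of $g$ is automatic from the smoothing of $(1-\partial_y^2)^{-1}$. Writing $\bar\eta(t,y)$ for the flow of $w$, so that $\partial_t\bar\eta=\bar\eta\,g(t,\bar\eta)$ and $\bar\eta(0,y)=y$, the transport equation becomes along characteristics a \emph{linear} homogeneous scalar ODE for $\phi(t,\bar\eta(t,y))$, with bounded coefficient $-(4g-yg_y)$ evaluated along the flow.

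Because that ODE is linear, the sign of $\phi$ is preserved along each characteristic, so $\phi_0\ge0$ forces $\phi(t)\ge0$ for as long as the solution exists; since $g=\tfrac12 e^{-|\cdot|}*\phi$ with a positive kernel, this gives $g_0\ge0$ (and $g(t)\ge0$), which is the first dichotomy claim. For global existence when $\phi_0\ge0$ I would write the one-dimensional shadow of the conservation law in Proposition~\ref{conservationlawprop}: with $J(t,y)=\partial_y\bar\eta(t,y)$ and $E(t,y)=\exp\int_0^t(g-yg_y)(\tau,\bar\eta(\tau,y))\,d\tau$, the Jacobian of the full three-dimensional Lagrangian flow equals $JE$, and \eqref{conservation} collapses to $\phi(t,\bar\eta(t,y))=\phi_0(y)\,J(t,y)^{-3/2}E(t,y)^{-5/2}$. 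Combining this with $\|g(t)\|_{L^\infty},\|g_y(t)\|_{L^\infty}\le\tfrac12\|\phi(t)\|_{L^1}$ and an estimate showing that the relevant weighted $L^1$-norm of $\phi$ grows only exponentially when $\phi\ge0$, one rules out collision of characteristics ($J\to0$) in finite time, so the solution persists for all $t$ --- the precise analogue of McKean's theorem for Camassa--Holm.

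The blow-up statement is the delicate part and is where I expect the main obstacle. By uniqueness $g(t,\cdot)$ stays even, so $y=0$ is a stagnation point ($\bar\eta(t,0)\equiv0$) with $g_y(t,0)=g_{yyy}(t,0)=0$, and evaluating the transport equation there gives, for $a(t)=g(t,0)$ and $b(t)=g_{yy}(t,0)$, the identity $\dot a-\dot b=4a(b-a)$. Independently, integrating \eqref{onedimensionalversion} over $\mathbb{R}$ (the term $\int g_{tyy}\,dy$ vanishes) yields $\tfrac{d}{dt}\int_{\mathbb{R}}g(t,y)\,dy=-4\|g(t)\|_{L^2}^2-2\|g_y(t)\|_{L^2}^2<0$, so this integral is strictly decreasing and stays negative once $g_0<0$. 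The plan is to upgrade one of these into a Riccati-type inequality $\dot P\le -cP^2$ with $c>0$ for a scalar $P$ that is negative and decreasing --- either $P=a$, bounding the error $4ab+\dot b$ by sign information on $b$ and the propagated decay, or $P=\int_{\mathbb{R}}g\,dy$, bounding $\|g\|_{L^2}^2$ below by a multiple of $\bigl(\int g\bigr)^2$ by concentrating mass near the origin with the help of the $1/y^2$ decay. Such an inequality forces $P(t)\to-\infty$ at a finite time $T$, hence $\int_0^t g(\tau,0)\,d\tau\to-\infty$; since $\partial_t\log J(t,0)=a(t)$, one then shows the characteristics compress ($J(t,y)\to0$ for $y$ near $0$), and the conservation law makes $\phi(t)$ unbounded below on a set of positive measure. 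Finally, because $g(t,y)=\tfrac12\int_{\mathbb{R}}e^{-|y-z|}\phi(t,z)\,dz$ and the kernel is positive everywhere, this forces $g(t,y)\to-\infty$ for \emph{every} $y\in\mathbb{R}$. The technical heart --- controlling the error terms in the Riccati inequality via the propagated spatial decay and the positivity of the $(1-\partial_y^2)^{-1}$ kernel --- is where the real work lies; everything else is bookkeeping with the transport structure and the conservation law, and the full argument is carried out in \cite{prestonsarria}.
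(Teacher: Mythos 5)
Note first that the paper does not actually prove this theorem: it is quoted from \cite{prestonsarria}, and what follows the statement is only a two-paragraph heuristic. Your proposal tracks that heuristic closely and your computations, where you give them, are correct: rewriting \eqref{onedimensionalversion} as $\phi_t+yg\,\phi_y+(4g-yg_y)\phi=0$ is right; your conservation law $\phi(t,\bar\eta(t,y))=\phi_0(y)J(t,y)^{-3/2}E(t,y)^{-5/2}$ is algebraically identical to the paper's $\phi\big(t,\gamma(t,y)\big)=\phi_0(y)y^5\gamma_y(t,y)/\gamma(t,y)^5$ (both equal $\phi_0(y)\exp\int_0^t(-4g+\gamma g_y)\,d\tau$); sign preservation and the identity $\tfrac{d}{dt}\int_{\mathbb R}g\,dy=-4\lVert g\rVert_{L^2}^2-2\lVert g_y\rVert_{L^2}^2$ (which is exactly the paper's assertion that $\int\phi$ decreases) are correct; and the global existence argument for $\phi_0\ge 0$ via an $L^1$ bound on $\phi$ and the positive kernel of $(1-\partial_y^2)^{-1}$ is sound.

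The genuine gap is the blow-up half, which you flag but do not close. The stagnation-point identity $\dot a-\dot b=4a(b-a)$ is just the transport equation for $\phi(t,0)$ and yields no Riccati inequality for $a=g(t,0)$ by itself: with $a<0$ and $a-b\le 0$ it only says $\phi(t,0)$ decreases. The alternative $P=\int_{\mathbb R}g\,dy$ also does not obviously work, since $\lVert g\rVert_{L^2}^2\ge c\big(\int g\big)^2$ needs the mass of $g$ confined to a bounded set uniformly in $t$, which you have not established. The missing ingredient is the paper's inequality \eqref{C1C0}, $\lvert g_y\rvert\le\lvert g\rvert$ whenever $\phi$ has a single sign --- which in fact follows in one line from your own positive-kernel observation, since $g_y(y)=\tfrac12\int\sgn(z-y)e^{-\lvert y-z\rvert}\phi(z)\,dz$ so $\lvert g_y(y)\rvert\le\tfrac12\int e^{-\lvert y-z\rvert}\lvert\phi(z)\rvert\,dz=\lvert g(y)\rvert$ when $\phi$ does not change sign. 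That bound is what (i) controls $g_{yy}(t,0)$ in terms of $g(t,0)$ and the momentum so as to produce the concrete inequality $g_t(t,0)\le-\sqrt6\,g(t,0)^2$, and (ii) propagates the blow-up to every $y$ via $g(t,y)\le g(t,0)e^{-\lvert y\rvert}$, replacing your more roundabout route through compression of characteristics and local $L^1$ unboundedness of $\phi$. Without \eqref{C1C0} or an equivalent quantitative substitute, the even-and-negative case remains unproved; the details are in \cite{prestonsarria}.
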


Essentially what happens here is that the integral of the momentum $\phi = g-g_{yy}$ is not conserved (as it would be in the finite-energy case).\footnote{Note that the actual momentum is $m(t,x,y,z)=z\phi(t,y)$, so the integral of it over the unbounded $z$ domain is infinite.} Instead one can show that $\frac{d}{dt} \int_{\mathbb{R}} \phi(t,y) \, dy \le 0$.  The momentum gets transported by the flow $\gamma$ of the vector field $y\mapsto yg(t,y)$, in the sense that $\phi\big(t,\gamma(t,y)\big) = \phi_0(y)y^5 \gamma_y(t,y)/\gamma(t,y)^5$, the analogue of the momentum conservation law \eqref{conservation}. Thus if $\phi_0(y)$ never changes sign, then neither does $\phi(t,y)$ for any $t>0$. Furthermore we can prove that if $\phi$ never changes sign, we have the global bound
\begin{equation}\label{C1C0}
\lvert g_y(t,y)\rvert \le \lvert g(t,y)\rvert \quad \text{for all $t$ and $y$}.
\end{equation}
If $\phi_0$ is nonnegative, then the $L^1$ norm of $\phi$ decays in time, and we obtain a global $L^1$ bound on $\phi$ which is sufficient to obtain an $L^{\infty}$ bound on $g$ and hence also $g_y$, which gives global existence. On the other hand if $\phi$ and hence $g$ are nonpositive and symmetric about the origin initially, they will remain so for all time, and we can compute the bound $g_t(t,0) \le -\sqrt{6} g(t,0)^2$. This implies blowup of $g(t,0)$ in finite time along with blowup of $g(t,y)$ for all other $y$ as a consequence of the differential inequality \eqref{C1C0}.

\subsubsection{Peakon solutions}

Euler-Arnold equations of the form \eqref{main} satisfying conservation laws that can be expressed in the form \eqref{conservation} have weak solutions where the momentum $m$ is supported on some lower-dimensional collection of subsets. For example the Camassa-Holm equation \eqref{camassaholm} has solutions which take the form $m(t,\onedparam) = \sum_{k=1}^n p_k(t) \delta\big(\onedparam-q_k(t)\big)$, where the functions $p_k(t)$ and $q_k(t)$ satisfy a Hamiltonian system. The velocity field $u = (1-\partial_{\onedparam}^2)^{-1}m$ is continuous and has cusps at each of the points $q_k(t)$. Similarly one may consider singular solutions of the ideal Euler equation; in two dimensions the solutions with vorticity concentrated on points is a well-known model, and in three dimensions one may consider either the equation for vortex filaments (that is, vorticity concentrated on curves which evolve in space) or for vortex sheets (where vorticity is concentrated on surfaces). See Khesin~\cite{khesinvortexsingular} for a general discussion of such solutions in ideal fluids, and Holm et al.~\cite{holmpeakons} for a discussion in the case of the Camassa-Holm and EPDiff equations. Note that there are subtleties here: for example while peakons in the Camassa-Holm equation are genuine weak solutions due to the rather mild singularity in the one-dimensional Green function, point and filament vorticity models are less mathematically rigorous due to the unboundedness of the higher-dimensional Green functions; in such cases one may need to apply a renormalization procedure to obtain a closed system. %See Marchioro-Pulvirenti~\cite{marchioropulvirenti} for an analysis of such issues.

The general approach is to integrate the conservation law to the form \eqref{conservation}, and assume that $m$ is a sum of delta-function distributions supported on submanifolds of various codimensions. Depending on the codimension, we may end up with a velocity field $u$ that is well-defined even though $m$ is singular, and this velocity field generates a flow $\eta$ along which the supports of $m$ will move. For example in the Camassa-Holm equation the Schwarz kernel of the operator $(1-\partial_{\onedparam}^2)^{-1}$ is given by $K(\onedparam_1,\onedparam_2) = \tfrac{1}{2} e^{-\lvert \onedparam_1 - \onedparam_2\rvert}$ which is bounded, and hence if $m$ is a sum of delta functions, then the velocity field will always be a weak solution in $H^1(\mathbb{R})$. More generally one can consider distributions supported on sets of higher codimension, although only when the codimension is one can we expect the Schwartz kernel to be bounded. General singular solutions of this form for the symplectomorphism Euler-Arnold equation~\cite{ebinsymplectic} were studied by Khesin~\cite{khesinsymplectic}, particularly in the case of point symplectic vortices.

%The most interesting submanifolds of a contact manifold $M$ of dimension $2n+1$ are the Legendrian submanifolds of dimension $n$, defined by the condition that their tangent spaces are everywhere in the kernel of the contact structure. For example if $M$ is of dimension $3$, then we have Legendrian knots which are curves $s\mapsto \gamma(s)$ such that $\contact(\gamma(s))=0$ for all $s$. The major application of such submanifolds is in the Cartan theory of PDEs, where a first-order partial differential equation in a single function $u$ in $n$ variables $(x^1,\ldots, x^n)$ is represented by the solution of $F(x^1,\ldots, x^n, u_{x_1}, \ldots, u_{x^n}, u)=0$. We may write this as the algebraic equation $F(x^1,\ldots, x^n, p^1,\ldots, p^n, z) = 0$, and subsets in $\{x^1,\ldots, x^n, p^1,\ldots, p^n, z\}$-space satisfying this equation represent solutions of the PDE if and only if the contact form $\contact = dz - \sum_k p^k \, dx^k$ vanishes along them.

Heuristically, the conservation law \eqref{conservation} %m\big(t, \eta(t,p)\big) = m_0(p)/\Jac(\eta(t,p))^{(n+2)/(n+1)},
 implies that if an initial momentum $m_0$ is concentrated on a submanifold $\Gamma_0$, then the momentum will be concentrated on a curve $\Gamma(t)$ of submanifolds of the same dimension for all time $t$. The Lagrangian flows $\eta(t)$ will be contactomorphisms for all $t$. Thus for example we could consider the evolution of a Legendrian submanifold (of dimension $n$) or, in case $n=1$ so that $M$ is three-dimensional, we could consider the evolution of an overtwisted disc~\cite{EKM}.

Here we will just compute a simple example in the situation of Examples \ref{contactexamples} and \ref{torus3d}, on the torus $\mathbb{T}^3 = (\mathbb{R}/2\pi\mathbb{Z})^3$.  Recall the contact form is $\contact = \sin{z} \, dx + \cos{z} \, dy$, the Reeb field is $\Reeb=\sin{z} \, \partial_x + \cos{z} \, \partial_y$, and the associated metric is the flat Euclidean metric $ds^2 = dx^2 + dy^2 + dz^2$. As in Example \ref{contactexamples} we have
\begin{equation}\label{Soptorus3d}
\Sop f = (f\sin{z} + f_z\cos{z}) \, \tfrac{\partial}{\partial x} + (f\cos{z} - f_z \sin{z}) \, \tfrac{\partial}{\partial y} + (f_y \sin{z} - f_x \cos{z}) \, \tfrac{\partial}{\partial z}.
\end{equation}
Consider one-parameter stream functions $f$ such that $m=f-\Laplacian f$ is zero except on a two-dimensional submanifold. For example if $f=f(z)$, then we solve the equation $f(z)-f''(z)=0$ for $-\pi < z<\pi$, demanding only continuity at $z=\pi=-\pi$ (but not differentiability). We obtain $f(z) = \cosh{z}$, for which the velocity field given by \eqref{Soptorus3d} is
$$ u(x,y,z) = (\sin{z} \cosh{z} + \sinh{z} \cos{z}) \, \tfrac{\partial}{\partial x} + (\cosh{z} \cos{z} - \sinh{z}\sin{z}) \, \tfrac{\partial}{\partial y}.$$
As $z\searrow -\pi$ we obtain $u(x,y,-\pi) = \sinh{\pi} \, \tfrac{\partial}{\partial x} - \cosh{\pi} \, \tfrac{\partial}{\partial y}$, while as $z\nearrow \pi$ we have $u(x,y,\pi) = -\sinh{\pi} \, \tfrac{\partial}{\partial x} - \cosh{\pi} \, \tfrac{\partial}{\partial y}$. Thus the velocity field has a jump discontinuity across the $2$-torus $z=\pi=-\pi$, where the flow shears horizontally. Since the surface is not moved by the velocity field (i.e, the velocity field is tangent to the singular surface), this is a steady solution of equation \eqref{main}.

This jump discontinuity of tangential components across the singular surface is typical. The basic model is a function satisfying $f-f_{zz}=\delta(z)$ on $\mathbb{R}^3$ with a singularity at $z=0$, given by $f(x,y,z) = \tfrac{1}{2} e^{-\lvert z\rvert}$, which is differentiable in the $x$ and $y$ directions and continuous but not differentiable in the $z$ direction. Suppose $f$ is a solution of $f - \Laplacian f = 0$ on the complement of some surface $\Gamma_0$, and let $N$ denote a unit normal vector to $\Gamma_0$ at a point $p\in \Gamma_0$. Let $\{\Reeb, P, Q\}$ be an orthonormal frame as in Proposition \ref{associatedprop}, so that $u = \Sop f = f \Reeb + \Reeb \times \grad f$ (using the cross product where $\Reeb \times P = Q$, $P\times Q= \Reeb$, and $Q\times \Reeb = P$). Then
$$ \langle u, N\rangle = f\langle \Reeb, N\rangle + \langle N, \Reeb\times \grad f\rangle = f \langle \Reeb, N\rangle + \langle N\times \Reeb, \grad f\rangle.$$
Since this differentiates $f$ only in the direction $N\times \Reeb$ which is tangent to the surface, it is continuous on $\Gamma_0$.
Thus if we want to interpret the peakon solutions as an evolution equation for a singular surface, we should consider only the evolution of the normal vector field to the surface, which is well-defined.

The only way $u= \Sop f$ is well-defined on the entire manifold is if $u$ never differentiates in the normal direction, which means $\Reeb$ would have to be parallel to $N$ everywhere along the singular surface. In other words the tangent plane to the surface would have to be an integral surface of the contact structure, which is of course impossible by the definition of the contact structure. Hence the shear in the velocity field is a characteristic feature of contact geometry.

%
%\begin{proposition}
%Suppose $M$ is a three-dimensional contact manifold with an associated Riemannian metric. Suppose $f_0$ is a function satisfying
%$m_0=f_0-\Laplacian f_0=0$ on $M\backslash \Gamma_0$ where $\Gamma_0$ is a smooth submanifold. If $u_0 = \Sop f_0$, then $u_0$ is continuous
%on $M\backslash \Gamma_0$ and has a jump discontinuity on $\Gamma_0$ in a direction tangential to $\Gamma_0$.
%\end{proposition}
%
%\begin{proof}
%To study singular points of $u_0$, it is sufficient to look at a small neighborhood of a point $p_0\in \Gamma_0$. In such a small neighborhood
%the singular part of the Schwartz kernel $K$ of the operator $(1-\Laplacian)^{-1}$ is proportional to $K_0(p,q) \simeq \frac{1}{\dist(p,q)}$.
%Let $B_r$ be a small geodesic ball around $p_0$; then we have
%$$ f(p_0) = \int_M K(p_0,q) m(q) \, d\mu(q) = \int_{\Gamma_0} K(p_0,q) \, dA = \int_{B_r\cap \Gamma_0} K_0(p_0,q) \, dA + \text{nonsingular terms.}$$
%However the singular term looks in coordinates such that $B_r \cap \Gamma_0$ is a flat disc like
%$$ \int_{B_r\cap \Gamma_0} K_0(p,q) \, dA = \int_0^{r} \int_0^{2\pi} \frac{1}{r} r\, dr \,d\theta = 2\pi r.$$
%This term is continuous

\subsubsection{Conservation laws}

The Camassa-Holm equation~\eqref{camassaholm} is a completely integrable system, which implies that there are sufficiently many conservation laws that one can use them to form action-angle variables in which the flow is linear. The most obvious conservation law is the $H^1$ energy given (see e.g., \cite{lenellscamassaholm}) by $C_1[m] = \int_{S^1} mu \, d\onedparam = \int_{S^1} u^2 + u_{\onedparam}^2 \,d\onedparam$. Another conservation law (which can be used to generate another compatible Hamiltonian structure) is $C_2[m] = \int_{S^1} u^3 + uu_{\onedparam}^2 \, d\onedparam$. Others include the quantities $C_{-1}[m] = \int_{S^1} \sqrt{m} \, d\onedparam$ and $C_0[m] = \int_{S^1} m\,d\onedparam$, where the integral of $\sqrt{m}$ is taken over only the subset where $m$ is positive (a similar law works for $\sqrt{-m}$ on the subset where $m$ is negative). Some of these laws, in particular $C_1$, $C_0$, and $C_{-1}$, generalize in a very obvious way to our higher-dimensional case \eqref{main}, and we will present those laws here. It is not clear whether the other conservation laws work here or what form they should take, but it would certainly be interesting to obtain a form of complete integrability for equation \eqref{main}; as it is there are very few examples of completely integrable  systems in any dimension higher than one; see for example \cite{KLMP2}.

\begin{proposition}\label{conservationlaws}
Suppose $f$ is a solution of the equation \eqref{main} on a contact manifold of dimension $2n+1$, with an associated Riemannian metric for which $m=f-\Laplacian f$ and for which $\Reeb$ is a Killing field. Then the following quantities are constant along any solution:
$$ C_0 = \int_M m \, d\mu, \quad C_{-1,\pm} = \int_M m_{\pm}^r \, d\mu, \quad \text{ and } C_1 = \int_M mf \, d\mu,$$
where $r=\frac{n+1}{n+2}$ and $m_+$ and $m_-$ denote the positive and negative parts of $m$ (i.e., $m_+(x) = m(x)$ if $m(x)>0$ and $m_+(x)=0$ otherwise).
\end{proposition}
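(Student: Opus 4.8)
The plan is to differentiate each of the three quantities in time along a solution $f(t)$ of \eqref{main} and reduce the computation either to the pointwise conservation law of Proposition~\ref{conservationlawprop}, namely $m\big(t,\eta(t,p)\big)=m_0(p)\,\Jac(\eta(t,p))^{-(n+2)/(n+1)}$, or to a single integral identity involving the Killing field $\Reeb$. Throughout I would use $\diver(\Sop f)=(n+1)\Reeb(f)$ from \eqref{divergenceSop} together with the self-adjointness of $\contactlap$.

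\emph{The quantities $C_{-1,\pm}$.} Since $\Jac(\eta)>0$, the conservation law forces $m(t,\eta(t,p))$ to have the same sign as $m_0(p)$, so $m_{\pm}\big(t,\eta(t,p)\big)=(m_0)_{\pm}(p)\,\Jac(\eta(t,p))^{-(n+2)/(n+1)}$. Changing variables $q=\eta(t,p)$, so that $d\mu(q)=\Jac(\eta(t,p))\,d\mu(p)$, gives
\[
C_{-1,\pm}(t)=\int_M \bigl((m_0)_{\pm}(p)\bigr)^{r}\,\Jac(\eta(t,p))^{\,1-r(n+2)/(n+1)}\,d\mu(p),
\]
and the exponent of $\Jac$ vanishes exactly when $r=(n+1)/(n+2)$, which gives $C_{-1,\pm}(t)=\int_M\bigl((m_0)_{\pm}\bigr)^{r}\,d\mu$. (For $n=0$ this is $r=\tfrac12$, recovering the Camassa-Holm quantity $\int\sqrt{m_{\pm}}$.) No hypothesis on $\Reeb$ enters here; the whole content is the identification of $r$. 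The same manipulation applied to $C_0$ only produces $\int_M m_0\,\Jac(\eta)^{-1/(n+1)}\,d\mu$, whose integrand is genuinely time-dependent, so $C_0$ needs a separate argument.

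\emph{The quantity $C_1$.} By \eqref{contactometric}, $C_1=\int_M mf\,d\mu=\llangle\Ssemiop f,\Ssemiop f\rrangle$ is the squared length of the velocity $\Ssemiop f$ of the geodesic $\eta(t)$ (Proposition~\ref{derivationprop}), hence constant; this is the cleanest argument. One can also compute directly: self-adjointness of $\contactlap$ gives $\frac{d}{dt}\int_M mf\,d\mu=2\int_M m_t f\,d\mu$, and substituting \eqref{main} and integrating by parts via \eqref{divergenceSop} collapses the right side to $2\int_M m\big(\Sop f(f)-f\Reeb(f)\big)\,d\mu=2\int_M m\{f,f\}\,d\mu$, which vanishes since $\{f,f\}=0$ (either from $[\Ssemiop f,\Ssemiop f]=0$ with \eqref{contactpoisson}, or directly from \eqref{associatedScontact}, since $\langle\grad f,\phi\grad f\rangle=d\contact(\grad f,\grad f)=0$).

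\emph{The quantity $C_0$, and the main obstacle.} Differentiating, using \eqref{main}, and then integrating $\int_M\Sop f(m)\,d\mu=-(n+1)\int_M m\,\Reeb(f)\,d\mu$ by parts via \eqref{divergenceSop}, one is left with $\frac{d}{dt}C_0=-\int_M m\,\Reeb(f)\,d\mu$, so everything comes down to the identity $\int_M m\,\Reeb(f)\,d\mu=0$ for $m=\contactlap f$. This is the one place where the Killing hypothesis is genuinely needed, and it is the hard part. Writing $m=f-\Laplacian f$: the term $\int_M f\,\Reeb(f)\,d\mu=\tfrac12\int_M\Reeb(f^2)\,d\mu$ vanishes because $\Reeb$ is divergence-free (Proposition~\ref{associatedprop}); and $\int_M(\Laplacian f)\,\Reeb(f)\,d\mu=-\int_M\langle\grad f,\grad(\Reeb(f))\rangle\,d\mu$, where, since $\Reeb$ is Killing, the bilinear form $(X,Y)\mapsto\langle\nabla_X\Reeb,Y\rangle$ is skew, so from $\grad(\Reeb(f))=\grad\langle\Reeb,\grad f\rangle$ and the symmetry of the Hessian of $f$ one obtains $\langle\grad f,\grad(\Reeb(f))\rangle=\tfrac12\Reeb(|\grad f|^2)$, whose integral again vanishes by $\diver\Reeb=0$. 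Hence $\frac{d}{dt}C_0=0$. In short, $\int_M(\contactlap f)\,\Reeb(f)\,d\mu=0$ is the only nonformal ingredient; everything else is change of variables in \eqref{conservation} and integration by parts.
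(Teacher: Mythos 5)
Your proof is correct and follows essentially the same route as the paper: the reduction of $\tfrac{d}{dt}C_0$ to the identity $\int_M(\contactlap f)\,\Reeb(f)\,d\mu=0$ via $\diver\Reeb=0$ and the Killing property, conservation of the geodesic energy for $C_1$, and the exponent matching $r=\tfrac{n+1}{n+2}$ for $C_{-1,\pm}$. The only cosmetic difference is that for $C_{-1,\pm}$ you change variables in the Lagrangian conservation law \eqref{conservation}, while the paper writes the equation in the Eulerian divergence form $\partial_t(m^r)+\diver(m^r u)=0$ on the region where $m$ has a fixed sign (transported by the flow); the two arguments are equivalent.
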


\begin{proof}
To prove these, we observe that if $u=S_{\contact}f$, then $\diver{u} = (n+1) \Reeb(f)$ by Proposition \ref{associatedprop}. We can thus write \eqref{main} as
\begin{equation}\label{divergenceform}
m_t + u(m) + \frac{n+2}{n+1} (\diver{u}) m = 0.
\end{equation}
Integrating both sides over $M$ we obtain
\begin{align*}
\frac{d}{dt} \int_M m \, d\mu &= -\frac{1}{n+1} \int_M (\diver{u}) m \, d\mu \\
&= -\int_M f\Reeb(f) \, d\mu + \int_M \Reeb(f) \Laplacian(f) \, d\mu \\
&= \tfrac{1}{2} \int_M f^2 \diver{\Reeb} \, d\mu - \int_M \langle \grad f, \grad \Reeb(f)\rangle \, d\mu \\
&= -\tfrac{1}{2} \int_M \lvert \grad f\rvert^2 \diver{\Reeb} \, d\mu \\
&= 0,
\end{align*}
since $\diver{\Reeb}=0$ by Proposition \ref{associatedprop}, and $\Reeb$ commutes with the gradient since it preserves the metric (so that $\grad \Reeb(f) = \nabla_{\Reeb} \grad f$).

For the other conservation law, note that on a domain for which $m$ is positive we can write equation \eqref{divergenceform} as
$$ \partial_t(m^r) + \diver{( m^r u)} = 0,$$
which immediately leads to the conservation law
$$ \int_{\Omega(t)} m(t)^r \, d\mu = \text{constant},$$
where $\Omega(t)$ is a domain transported by the flow. But of course by the conservation law \eqref{conservation}, the region $\Omega(t)$ on which $m(t)$ is positive is transported by the flow. The same argument leads to conservation of $m_-$.

Finally, the fact that $C_1$ is conserved follows easily from the fact that $C_1 = \lVert f(t)\rVert^2_{H^1} = \lVert \Ssemiop f(t)\rVert^2$, which is precisely the
energy of the velocity vector of a geodesic (which is always conserved). Alternatively we could derive it directly via integration by parts, as for the other laws.
\end{proof}

\makeatletter \renewcommand{\@biblabel}[1]{\hfill#1.}\makeatother

\end{document}